\definecolor{s1}{RGB}{228, 26, 28}
\definecolor{s2}{RGB}{55, 126, 184}
\definecolor{s3}{RGB}{77,175,74}
\definecolor{s4}{RGB}{152,78,163}
\definecolor{s5}{RGB}{255,127,0}
\definecolor{s6}{RGB}{166,86,40}
\newtheorem{theorem}{Theorem}
\newtheorem{lemma}{Lemma}
\newtheorem{proposition}{Proposition}
\newtheorem{corollary}{Corollary}
\DeclareMathOperator{\interior}{int} 
\DeclareMathOperator{\tr}{tr} 
\DeclareMathOperator{\Span}{span}
\DeclareMathOperator*{\argmin}{arg\,min}
\def\ESS{\mathcal E}
\def\NSS{\mathcal V}
\newcommand{\spa}{\mathcal{X}}
\newcommand{\laplace }{\Delta}
\newcommand{\projL}{\Pi_L}
\newcommand{\jordan}{\circ}
\newcommand{\E}[1]{E_{#1}}
\newcommand{\V}[1]{V_{#1}}
\newcommand{\dd}{\,\mathrm d}
\newcommand{\Sym}[1]{\mathcal S^{#1}}
\newcommand{\LowerCorner}{\Sym{p}_{\ulcorner}}
\newcommand{\gl}[1]{\mathbb{GL}^{#1}}
\newcommand{\cd}{\,|\,}
\newcommand{\transm}{^{\top}}
\newcommand{\transp}{^*}
\newcommand{\ipd}[2]{\left\langle #1, #2 \right\rangle_d}
\newcommand{\ipp}[2]{\left\langle #1, #2 \right\rangle_p}
\newcommand{\normd}[1]{\left\| #1 \right\|_d}
\newcommand{\normp}[1]{\left\| #1 \right\|_p}
 \def\imod#1{\allowbreak\mkern10mu({\operator@font mod}\,\,#1)}
\begin{document}

\begin{frontmatter}
\title{Linear Estimating Equations for Exponential Families with Application to Gaussian Linear Concentration Models} 
\author{Peter G. M. Forbes and Steffen Lauritzen}
\address{Department of Statistics, University of Oxford \\ 1 South Parks Road, Oxford OX1 3TG, United Kingdom}
\begin{abstract}
In many families of distributions, maximum likelihood estimation is intractable because the normalization constant for the density which enters into the likelihood function is not easily available.  The score matching estimator \citep{hyvarinen:05} provides an alternative  where this normalization constant is not required.  The corresponding  estimating equations become linear for an exponential family \citep{gidas,Hyvarinen2}. The score matching estimator is shown to be consistent and asymptotically normally distributed for such models, although not necessarily efficient. Gaussian linear concentration models are examples of such families. For linear concentration models that are also linear in the covariance \citep{jensen:88} we show that the score matching estimator is identical to the maximum likelihood estimator, hence in such cases it is also efficient. Gaussian graphical models  and graphical models with symmetries \citep{Hojsgaard} form particularly interesting subclasses of linear concentration models and we investigate  the potential use of the score matching estimator for this case. \\

\noindent AMS subject classifications:  62H12;  62F10.
\end{abstract}

\begin{keyword}
Gaussian graphical models \sep Jordan algebra models \sep Score matching \sep Scoring rules \sep Symmetry
\end{keyword}
\end{frontmatter}

\section{Introduction}

The increasing interest in analysis of high-dimensional data has necessitated the development of parsimonious multivariate models with reliable, computationally efficient estimation procedures.  For example, sparse Gaussian graphical models \citep{dobra:etal:04,ma:etal:07,rothman:etal:08,bickel:levina:08,chandrasekaran:etal:12} have drawn significant interest and several computationally efficient estimation procedures have been developed \citep{banerjee:etal:06,banerjee:etal:08,friedman:etal:08}.  Gaussian graphical models with symmetry \citep{Hojsgaard} form another example, though no efficient estimation procedures have yet been developed. Here we describe and exploit a method which provides linear estimating equations when applied to any exponential family, in particular to any Gaussian graphical model with symmetry.  In contrast to the maximum likelihood estimator, which often requires iterative methods, this \emph{score matching estimator} is computationally efficient for such families and therefore has potential for initial model screening.  Even when the maximum likelihood estimator is desired, it must be computed iteratively and the score matching estimator may provide a useful initial value for the iterations.

\section{Preliminaries}

Consider a random quantity taking values in an open subset $\spa$ of $\mathbb{R}^p$ which we consider equipped with the standard inner product $\ipp{\cdot}{\cdot}$, the associated norm $\normp{\cdot}$,  and the canonical basis. 
 
Throughout the paper,  $\mathcal P$ denotes a class of distributions over $\spa$ with twice continuously differentiable densities with respect to the Lebesgue measure on $\spa$.  The general developments below are equally valid for $\spa$ being a Riemannian manifold with associated geometric measure \citep{dawid:lauritzen:05}, but as our main focus is the multivariate Gaussian distribution we shall refrain from working at this level of generality.
We use $\nabla$ for the gradient and $\laplace$ for the Laplace operator on $\spa$ so that
\[\nabla f(x) = \left\{\frac{\partial}{\partial x_i} f(x)\right\} \in \mathbb{R}^p, \quad \laplace f(x) = \sum_{i=1}^p \frac{\partial^2}{\partial x_i^2}f(x).\]  

\subsection{Scoring rules}

A \emph{scoring rule} $S(x,Q)$ is a real-valued function which quantifies the accuracy of a predictive distribution $Q\in\mathcal P$ upon observing the realized value $x\in\spa$.  It is (strictly) \emph{proper} if the expected value $\E{X\sim P} S(X,Q)$ is (uniquely) minimized over $\mathcal P$ at $Q=P$.  Two scoring rules are \emph{equivalent} if they differ by a positive scalar multiple and a function of $x$ alone.

Every proper scoring rule induces a \emph{divergence} \citep{dawid:98,grunwald:dawid:04}: \[d(P,Q)=\E{X\sim P}\{S(X,Q) - S(X,P)\}.\]  The divergences of equivalent scoring rules are  proportional. A much used scoring rule is the \emph{log score} $S(x,Q)=-\log g(x)$, where $g$ is the density of $Q$ \citep{bernardo:79,good:52,mccarthy};  the corresponding divergence is then the Kullback--Leibler divergence.

Given an independent sample $x^1,\ldots, x^n$ with empirical distribution $\hat P$ and unknown distribution $Q$, the \emph{optimal score estimator} \citep{gneiting}  $\hat Q$ of $Q$ is determined as the minimizer of the  empirical score 
\[\hat Q=\argmin_{Q\in\mathcal P} \E{X\sim \hat P}\{S(X,Q)\}=\argmin_{Q\in\mathcal P} \sum_{i=1}^nS(x^i, Q)=\argmin_{Q\in\mathcal P} d(\hat P,Q) . \]
The first two expressions are well-defined even when $\hat P\not\in \mathcal{P}$ whereas the latter may not be.  \citet{dawid:lauritzen:05} show that for a parametrized family $\mathcal{P}=\{Q_\theta: \theta\in \Theta\}$ with $\Theta$ being an open subset of $\mathbb{R}^d$, this minimization gives rise to an \emph{unbiased estimating equation} \citep{godambe}
\begin{equation}\label{eq:estimating}
\sum_{i=1}^nS'(x^i, \theta)=0,
\end{equation}
where $S'(x,\theta)$ is the vector of derivatives of $S(x,Q_\theta)$ w.r.t.\ $\theta$.
Solutions to such equations are also known as M-estimators \citep{huber:64,huber:67} and these are typically consistent and asymptotically normal although not necessarily efficient. If $S(x,Q)=-\log g(x)$ is the log score, the equation \eqref{eq:estimating} is the likelihood equation and the corresponding estimator is the maximum likelihood estimator (MLE).

\subsection{Score matching estimator}

Suppose the density $g$ of $Q\in \mathcal{P}$ is twice continuously differentiable and satisfies the regularity assumptions:
\begin{equation}\label{eq:finitereg}
 \E{X\sim P}{\normp{\nabla \log g(X)}^2}<\infty \text{ for all $P,Q\in \mathcal{P}$}
 \end{equation} as well as
 \begin{equation}\label{eq:boundary}
 \text{$g(x)$ and $\normp{\nabla g(x)}$ tend to zero as $x$ approaches the boundary of $\spa$.}
\end{equation}
Then, using integration by parts, \citet{hyvarinen:05,Hyvarinen2} showed that the divergence function
\begin{equation*}
d_2(P,Q)=\E{X\sim P} \normp{\nabla\log g(x) - \nabla\log f(x)}^2
\end{equation*}
where $f$ is the density of $P$,
is induced by the scoring rule
\begin{equation*}
S_2(x,Q) =\frac{1}{2}\normp{\nabla\log g(x)}^2+\laplace \log g(x).
\end{equation*} This scoring rule can be shown to be proper \citep{dawid:07,dawid:lauritzen:05}.  
The \emph{score matching estimator} (SME) is the optimal score estimator for this scoring rule. Note that the SME is not invariant under transformations of $x$, nor under change of base measure.  Hence care must be taken when choosing the representation of the data to ensure the resulting estimator is suitable.

\section{Exponential families}

As indicated in \citep{Hyvarinen2}, the SME is particularly simple when $\mathcal P$ is an exponential family with densities $g(x\cd \theta)$ where
\begin{equation}
\log g(x \cd \theta) = \ipd{\theta}{t(x)} - a(\theta) +b(x),\quad \theta\in\Theta.
\label{eq:expFamily}
\end{equation}
Here $t(x)\in L$ is the canonical sufficient statistic, $L$ is a $d$-dimensional vector space, $\ipd{\cdot}{\cdot}$ an inner product on $L$, and $\Theta\subseteq L$ is the (convex) canonical parameter space
\[\Theta=\interior \left\{\theta\in L: a(\theta)=\log 
\int_\spa e^{\ipd{\theta}{t(x)}} \dd x<  \infty\right\}.\] 
Let $D(x)$ be the linear map from $L$ to $\mathbb{R}^p$ determined by the equation
$D(x)\eta = \nabla \ipd{\eta}{t(x)}$ for all $\eta\in L$.  
  Then we have
\begin{equation}\label{eq:gradient}\nabla\log g(x\cd \theta) = D(x)\theta+ \nabla b(x).\end{equation}  
Further we get 
\[\laplace \log g(x\cd \theta) = \ipd{\theta}{\laplace t(x)} + \laplace b(x)\] where, similarly,
 $\laplace t$ is given by $\ipd{\eta}{\laplace t(x)}=\laplace \ipd{\eta}{t(x)}$. 
We assume that the regularity conditions \eqref{eq:finitereg} and \eqref{eq:boundary} are satisfied; thus in particular both $\E{\theta}\normp{D(X) \eta}^2$ and $\E{\theta}\normp{\nabla b(X)}^2 $ are finite. 

Let $D(x)\transp$ be the transpose of $D(x)$ given by $\ipp{y}{D(x)\eta}=\ipd{D(x)\transp y}{\eta}$ for all $\eta\in L$ and $y\in \mathbb{R}^p$. 
We furthermore assume that the linear map $\Psi(\theta)=\E{\theta}\{D(X)\transp D(X)\}$ from $L$ to $L$ is invertible so the corresponding quadratic form \[
\ipd{\eta}{\Psi(\theta) \eta}= \E{\theta}\normp{D(X)\eta}^2\] is positive definite, i.e.\ it is non-zero unless $\eta=0$. 
The objective function $J_2(\theta)=\sum_{i=1}^nS_2(x^i,Q_\theta)$ becomes
\[
\sum_{i=1}^n\frac{1}{2}\normp{D(x^i)\theta}^2+\ipd{\theta}{D(x^i)\transp\nabla b(x^i)}+\ipd{\theta}{\laplace  t(x^i)}+\frac{1}{2}\normp{\nabla b(x^i)}^2+\laplace  b(x^i).
\]
  The last two terms depend on $x$ only and will henceforth be ignored: this yields an equivalent scoring rule and does not alter the SME.
	
The objective function $J_2$ depends quadratically on $\theta$ and the minimizer of $J_2$ is unique if and only if the quadratic form on $L$
\begin{equation}
\label{eq:D2}
D_2(\eta)=\sum_{i=1}^n \normp{D(x^i)\eta}^2
\end{equation}
is positive definite, i.e.\ if $D(x^i)\eta= 0$ for $i=1,\ldots,n$ implies $\eta=0$.
The SME is the minimizer of $J_2(\theta)$ over $\Theta$, leading to the  linear estimating equation for $\theta$ 
\begin{equation}
J'_2(\theta)=\sum_{i=1}^n  D(x^i) \transp \left\{ D(x^i)\theta +\nabla b(x^i) \right\} +\laplace  t(x^i) =0.
\label{eq:score}
\end{equation}
Thus, provided $\sum_{i=1}^n D(x^i)\transp D(x^i)$ is invertible, the score estimation equation has the unique solution 
\begin{equation}
\label{eq:scorematchingequation}
\check\theta_n = -\left\{\sum_{i=1}^n D(x^i)\transp D(x^i) \right\}^{-1} \sum_{i=1}^n  \left\{D(x^i)\transp \nabla b(x^i) +\laplace  t(x^i)\right\}.
\end{equation}
In the case $b(x)=0$ this is equivalent to the \emph{variational estimator} (eq. 1.9) of \citet{gidas} and to eq. 34 of \citep{Hyvarinen2}.  By taking the inner product of \eqref{eq:score} with the minimizer $\check \theta_n$ we further obtain
\[
\sum_{i=1}^n \normp{D(x^i) \check\theta_n}^2=-\sum_{i=1}^n\left\{\ipd{\check\theta_n}{D(x^i)\transp\nabla b(x^i)}+\ipd{\check\theta_n}{\laplace  t(x^i)}\right\}.
\]
Inserting this relation into the expression for $J_2(\check\theta_n)$ yields a linear dependence of the minimal value of $J_2$ on $\check\theta_n$ with the simple expression
\begin{equation}
J_2(\check\theta_n)=\sum_{i=1}^n \ipd{\check\theta_n}{D(x^i)\transp\nabla b(x^i)+\laplace  t(x^i)}/2
 \label{eq:optscore}
\end{equation}
where we have ignored the terms depending on $x$ alone.  

As mentioned earlier, SMEs are M-estimators and thus typically consistent, as also claimed in Corollary 3 of \citep{hyvarinen:05}. However, the argument in \citep{hyvarinen:05} is incomplete since the convergence of $J_2/n$ to its expectation does not  imply that the minimizer $\argmin_{\phi} J_2(\phi)$ converges to $\theta$ without additional conditions on $J_2$.  General consistency results are established under suitable regularity conditions in \citep{huber:67}, but for an exponential family we can establish consistency directly.

\begin{proposition}Assume $\mathcal P= P_\theta, \theta \in \Theta$ is an exponential family as above and $X^1,\ldots, X^n$ is a sample of size $n$ from $P_\theta$. Then the SME $\check\theta_n$ is asymptotically consistent for $\theta$. 
\label{prop:consistency}
\end{proposition}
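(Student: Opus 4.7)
The plan is to exploit the explicit closed-form expression \eqref{eq:scorematchingequation} for $\check\theta_n$, and reduce consistency to two applications of the strong law of large numbers combined with an unbiasedness identity for the estimating equation at the true parameter. Unlike the general M-estimator situation flagged in the paper, here the minimizer has an algebraic form, so there is no need to invoke uniform convergence of $J_2/n$.

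First I would divide numerator and denominator in \eqref{eq:scorematchingequation} by $n$ and apply the SLLN coordinate-wise to the two sums. The regularity assumption \eqref{eq:finitereg}, combined with the decomposition $\nabla\log g(x\cd\theta)=D(x)\theta+\nabla b(x)$ from \eqref{eq:gradient}, guarantees that $\E{\theta}\normp{D(X)\eta}^2$ and $\E{\theta}\normp{\nabla b(X)}^2$ are finite for all $\eta\in L$; hence each entry of $D(X)\transp D(X)$ has finite first moment, and Cauchy--Schwarz controls $\E{\theta}\|D(X)\transp\nabla b(X)\|$. The term $\laplace t(X)$ is similarly integrable under the standing regularity assumptions (this is implicit in writing down the estimating equation in the first place). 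Consequently, almost surely,
\[
\frac{1}{n}\sum_{i=1}^n D(X^i)\transp D(X^i) \longrightarrow \Psi(\theta),\qquad
\frac{1}{n}\sum_{i=1}^n \left\{D(X^i)\transp\nabla b(X^i)+\laplace t(X^i)\right\}\longrightarrow \mu(\theta),
\]
where $\mu(\theta)$ denotes the common expectation. Because $\Psi(\theta)$ is invertible by assumption, matrix inversion is continuous at the limit, so for all sufficiently large $n$ the empirical matrix is invertible and $\check\theta_n\to -\Psi(\theta)^{-1}\mu(\theta)$ almost surely.

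It remains to identify this limit with $\theta$. Since the score matching scoring rule $S_2$ is proper, the induced estimating equation \eqref{eq:estimating} is unbiased at the true parameter, i.e.\ $\E{\theta}\{J'_2(\theta)/n\}=0$. Taking the expectation of the summand on the left-hand side of \eqref{eq:score} gives
\[
\Psi(\theta)\,\theta + \mu(\theta)=0,
\]
so $\theta = -\Psi(\theta)^{-1}\mu(\theta)$, which matches the limit obtained above and establishes $\check\theta_n\to\theta$ almost surely.

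The only step that requires any care is the identification of the limit with $\theta$; I would either derive $\E{\theta}\{J'_2(\theta)\}=0$ directly from properness of $S_2$ (as noted in \citealt{dawid:lauritzen:05}), or verify it by hand using integration by parts on $\E{\theta}\{D(X)\transp D(X)\theta + D(X)\transp\nabla b(X)+\laplace t(X)\}$, invoking the boundary condition \eqref{eq:boundary} to kill the surface terms. Everything else is routine once the closed form \eqref{eq:scorematchingequation} is in hand.
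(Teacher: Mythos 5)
Your proposal is correct and follows essentially the same route as the paper's proof: divide the closed form \eqref{eq:scorematchingequation} by $n$, apply the law of large numbers to both sums, and identify the limit of the second sum as $-\Psi(\theta)\theta$ via the unbiasedness identity, which the paper establishes exactly by the integration-by-parts computation you sketch as your ``by hand'' option. The only cosmetic difference is that you also offer properness of $S_2$ as an alternative justification for that identity, whereas the paper carries out the integration by parts explicitly.
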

\begin{proof}
Each term in the last sum of \eqref{eq:scorematchingequation} has expectation
\[
\E{\theta}\{D(X) \transp \nabla b(X) +\laplace  t(X)\}=\int g(x\cd \theta)D(x)\transp \nabla b(x)\dd x + \int g(x \cd \theta)\laplace t(x)\dd x
\]
and both integrals on the right-hand side are finite by our assumptions.
Using integration by parts on the second term yields \[\int g(x\cd \theta)\laplace t(x)\dd x = -\int D(x)\transp \nabla g(x\cd \theta)\dd x,\] since the boundary terms vanish by \eqref{eq:boundary}.  
Using that $\nabla  g(x)=g(x)\nabla \log g(x)$ and substituting the expression \eqref{eq:gradient} for  $\log g(x\cd \theta)$ on the right-hand side  yields \[
\int g(x \cd \theta)\laplace t(x)\dd x
=-\int g(x \cd \theta)D(x)\transp \{D(x)\theta+ \nabla b(x)\}
\dd x,\] which then gives
\[
\E{\theta}\{D(X) \transp \nabla b(X) +\laplace  t(X)\}
= -\E{\theta}\{D(X)\transp D(X) \}\theta=-\Psi(\theta)\theta.\]
Thus, by the law of large numbers
\[\lim_{n\to \infty}
\frac{1}{n}\sum_{i=1}^n  \left\{D(X^i)\transp\nabla b(X^i) +\laplace  t(X^i)\right\} = -\Psi(\theta)\theta
\]in probability as well as with probability one. Similarly, for the first sum in \eqref{eq:scorematchingequation} we get
\[\lim_{n\to \infty}
\frac{1}{n}\sum_{i=1}^n  \left\{D(X^i)\transp D(X^i) \right\} = \Psi(\theta)
\]
which is invertible by assumption.  Hence the estimate $\check\theta_n $ converges to $\theta$ and is thus  consistent for $\theta$, as desired.
\end{proof}

Under  a few additional assumptions, we can also show that the SME is asymptotically normally distributed. \begin{proposition} \label{prop:normality}Assume $\mathcal P= P_\theta, \theta \in \Theta$ is an exponential family as above and $X^1,\ldots, X^n$ is a sample of size $n$ from $P_\theta$. Assume further that all of $
\E{\theta}\normd{D(X)\transp \nabla b(X)}^2$, $ \E{\theta}\normd{\laplace t(X)}^2$, and $ \E{\theta}\normd{D(X)\transp D(X) \theta}^2$ are finite. Then  $\sqrt{n}(\check\theta_n-\theta)$ converges in distribution to a normal distribution on $L$ with mean zero. 
\end{proposition}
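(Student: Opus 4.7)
The plan is to exploit the closed-form expression \eqref{eq:scorematchingequation} for $\check\theta_n$ and reduce the claim to a standard application of the multivariate central limit theorem and Slutsky's theorem; no Taylor expansion of the estimating equations is needed because they are already linear in $\theta$.

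First I would centre the estimating equation by introducing the $L$-valued random variable
\[
W_i = D(X^i)\transp D(X^i)\,\theta + D(X^i)\transp \nabla b(X^i) + \laplace t(X^i).
\]
The computation already carried out in the proof of Proposition~\ref{prop:consistency} shows that $\E{\theta}(W_i)=0$. Writing $\Psi_n = n^{-1}\sum_{i=1}^n D(X^i)\transp D(X^i)$, a direct rearrangement of \eqref{eq:scorematchingequation} then gives the clean identity
\[
\sqrt n\,(\check\theta_n - \theta) = -\Psi_n^{-1}\,\cdot\, \frac{1}{\sqrt n}\sum_{i=1}^n W_i.
\]

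Next I would apply the multivariate central limit theorem to $n^{-1/2}\sum_{i=1}^n W_i$ in the finite-dimensional inner product space $(L,\ipd{\cdot}{\cdot})$. The three moment hypotheses of the proposition, combined via the triangle inequality, give $\E{\theta}\normd{W_1}^2<\infty$, which is exactly what the CLT requires. Hence $n^{-1/2}\sum_i W_i$ converges in distribution to a centred Gaussian on $L$ with covariance operator $\Sigma(\theta) = \mathrm{Cov}_\theta(W_1)$. Simultaneously, the law of large numbers (as already used in the consistency proof) gives $\Psi_n \to \Psi(\theta)$ almost surely, and since $\Psi(\theta)$ is invertible by standing assumption, the continuous mapping theorem yields $\Psi_n^{-1}\to \Psi(\theta)^{-1}$ with probability one. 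Slutsky's theorem then combines the two limits to deliver
\[
\sqrt n\,(\check\theta_n - \theta) \xrightarrow{d} N\bigl(0,\ \Psi(\theta)^{-1}\Sigma(\theta)\Psi(\theta)^{-1}\bigr),
\]
which has mean zero, as required.

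The proof is essentially bookkeeping, so the only delicate point is ensuring that the CLT actually applies: this is precisely what the three moment conditions purchase, since each of the three summands making up $W_i$ must have a finite second moment in the norm $\normd{\cdot}$ in order for the limiting covariance $\Sigma(\theta)$ to exist. Everything else, including the eventual invertibility of $\Psi_n$ for large $n$ and the final sandwich form of the asymptotic covariance, is then automatic from the consistency result and Slutsky's theorem.
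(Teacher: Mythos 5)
Your proof is correct and follows essentially the same route as the paper's: both centre the sum by adding and subtracting $D(X^i)\transp D(X^i)\theta$, invoke the mean-zero computation from the consistency proof, apply the CLT to the normalized sum (justified by the stated second-moment hypotheses), and combine with $\Psi_n^{-1}\to\Psi(\theta)^{-1}$ via Slutsky's theorem. Your explicit identification of the sandwich covariance $\Psi(\theta)^{-1}\Sigma(\theta)\Psi(\theta)^{-1}$ is a nice touch that the paper only records afterwards as the Godambe information.
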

\begin{proof}
From \eqref{eq:scorematchingequation} we get
\begin{equation}
\sqrt{n}(\check\theta_n-\theta) = 
-\Psi_n^{-1} \sum_{i=1}^n  \left\{D(X^i) \transp \nabla b(X^i) +\laplace  t(X^i)+D(X^i)\transp D(X^i)\theta\right\}/\sqrt{n} \label{eq:standard}\end{equation}
where
\[\Psi_n=\frac{1}{n}\sum_{i=1}^n D(X^i)\transp D(X^i).\]
As  in the proof of Proposition~\ref{prop:consistency},  we conclude that $\Psi_n$ converges in probability to its expectation $\Psi(\theta)$, which is invertible by assumption.  The $i^{th}$ term of the  sum in \eqref{eq:standard} has expectation zero and finite variance and hence,  by the Central Limit Theorem, the normalized sum converges in distribution to a normal distribution on $L$ with mean zero. By Slutsky's theorem, so does  $\sqrt{n}(\check\theta_n-\theta)$, as desired. 
\end{proof}

The asymptotic inverse covariance of the SME is $nG(\theta)$ where $G(\theta)$ is the \emph{Godambe information}  \citep{godambe} 
\[ G(\theta)=\Psi(\theta)H(\theta)^{-1}\Psi(\theta).\]
Here \[H(\theta)= \V{\theta}\left\{D(X)\transp D(X)\theta +D(X)\transp \nabla b(X)  +\laplace  t(X)\right\};\]
see for example \cite[Section 9.2]{barndorff:cox:94} or \cite[Theorem 5.21]{vaart:98}. Note that, as we have shown above, $\Psi_n$ is a consistent estimator of $\Psi(\theta)$ and $H(\theta)$ can be estimated consistently by the corresponding empirical covariance. 

For finite $n$ it is possible that $\check\theta\not\in\Theta$.  Even in this case $\check\theta$ itself may be useful: the value of $J_2(\check\theta)$ can be very quickly computed and used for  model screening,  or $\check\theta$ can be used as a starting value for iterative estimation methods.  Consistency ensures that $\check\theta\in\Theta$ for $n$ sufficiently large.

As noted earlier, the score matching equation is not invariant under data reparametrization, nor under change of base measure.  We could in principle use the estimating equation after reducing to the sufficient statistic $t=\sum_i t(x^i)$,  which has density $\tilde g(t\cd \theta)$ where 
\[
\log \tilde g(t\cd\theta)= \ipd{\theta}{t}-na(\theta)+ h_n(t)
\]
and where $\exp\{h_n(t)\}$ is the density of the product of measures $\exp\{b(x^i\}\dd x^i$ transformed by the sufficient statistic.  Then the SME becomes $\check\theta=-\nabla h_n(t)/n$, which coincides with both \citeauthor{martinlof}'s exact estimator \citep{martinlof} and the maximum plausibility estimator \citep{maxplausibility} for this case.  The exact estimator is known to be consistent and efficient \citep{hoglund:74}.  Unfortunately, the form of $h_n(t)$ is often intractable and the exact estimator is often more difficult to calculate than the MLE.  

We have chosen not to reduce by sufficiency: our SME may lose statistical efficiency, but it gains computational speed from the simplicity of its estimating equations.

\section{Gaussian linear concentration models}

We now consider Gaussian models with linear structure in the concentration matrix \citep{anderson:70}, exploiting that they are special instances of the exponential families discussed above.  

Let $L$ be a $d$-dimensional subspace of $\Sym{p}$, the symmetric $p\times p$ matrices equipped with the trace inner product $\ipd{A}{B}= \tr(AB)$ and associated \emph{Frobenius} norm $\normd{A}^2=\tr(A^2)$.  Consider the family of Gaussian densities
\begin{align*}
\log p(x\cd K)& =  \{\log\det(K)-p\log(2\pi)-\ipp{x}{Kx}\}/2
\\&=-\ipd{K}{xx\transm}/2+\{\log\det(K)-p\log(2\pi)\}/2
\\&=\ipd{K}{-\projL(xx\transm)/2}+\{\log\det(K)-p\log(2\pi)\}/2
\end{align*}
which clearly has the form 
\eqref{eq:expFamily} with $\spa=\mathbb R^p$, $\theta=K$, $a(K)=p\log(2\pi)/2-\log\det(K)/2$, $b(x)=0$, and $t(x)= -\projL(xx\transm)/2$, where $\projL$ is the orthogonal projection onto $L$ in $\Sym{p}$. The canonical parameter space is $\Theta= L\cap \Sym{p}_+$, where $\Sym{p}_+$ is the set of positive definite symmetric $p\times p$ matrices, \citep[p.\ 116]{olesbog}.

The maps discussed above become
\begin{equation}\label{eq:basicmaps}
D(x)K=-Kx,\quad D(x)\transp y = -\projL(xy\transm +yx\transm)/2,\quad \laplace t(x)=-\projL(I_p),\end{equation}
where $I_p$ is the $p\times p$ identity matrix.  For simplicity we assume in the following that $I_p\in \Theta$; this can always be achieved for non-empty $\Theta$ by choosing a suitable basis for $\mathbb R^p$.
Then we have $\projL(I_p)=I_p$ so the Laplacian becomes $\laplace t(x)=-I_p$.

To see that  \eqref{eq:basicmaps} holds, note that for any $K\in L$ we have 
\[\nabla\ipd{K}{\projL(xx\transm)}=\nabla\ipd{K}{xx\transm}= \nabla \tr(Kxx\transm)
= 2Kx\]
so $D(x)K=-Kx$. Furthermore we have
\begin{align*}
\ipd{D(x)\transp y}{K}& = \ipp{ y}{D(x)K} = -\ipp{y}{Kx}=-\tr(xy\transm K)
\\&= -\ipd{xy\transm +yx\transm}{K}/2=\ipd{-\projL(xy\transm +yx\transm)/2}{K}
 \end{align*}
and finally
\[\laplace \ipd{K}{\projL(xx\transm)}=\laplace\ipd{K}{xx\transm}
=2\tr(K)=2\ipd{K}{I_p}.\]
In particular we get
\begin{equation}
\label{eq:DDK}
D(x) \transp D(x)  K = -D(x)\transp Kx =\projL(xx\transm K+ K xx\transm)/2=\projL (K\jordan xx\transm),
\end{equation}
where $A\jordan B= (AB\transm + BA\transm)/2$ is the \emph{Jordan product} \citep{albert:46} of the symmetric matrices $A$ and $B$. 
The estimating equation \eqref{eq:score} now specializes to
\begin{equation}\label{eq:smeconc}\projL (K\jordan W)= I_p,\end{equation}
where we have let $W=n^{-1}\sum_{i=1}^n x^i {x^i}\transm$ be the scaled Wishart matrix of sums of squares and products.
The expression \eqref{eq:optscore} for $J_2$ becomes simply $-n\tr{\check K/2}$ which can be evaluated very quickly. 

We next verify that $p(x\cd K)$ satisfies the assumptions for consistency and asymptotic normality.  For consistency we must satisfy the regularity conditions \eqref{eq:finitereg} and \eqref{eq:boundary}.  It is obvious that both $p(x\cd K)$ and $\normp{\nabla p(x\cd K)}=|p(x\cd K)|\normp{Kx}$ tend to zero as $\normp{x}\rightarrow\infty$, and furthermore
\begin{align*}
\E{\theta}\normp{\nabla\log p(X\cd K)}^2 =\E{\theta}\{\tr(KXX\transm K)\} = \tr(K\theta^{-1}K)<\infty.
\end{align*}

For asymptotic normality we note that  $\E{\theta}\normd{D(X) \nabla b(X)}^2=0$ since $b(x)=0$.  Furthermore $\E{\theta}\normd{\laplace t(X)}^2=\E{\theta}\{\tr(I_p)\}=p<\infty$.  Finally,
\begin{align*}
\E{\theta}\{\normd{D(X)\transp  D(X) K}^2\}&=\E{\theta} [\{\tr\projL (K\jordan XX\transm)\}^2]\\
&\leq \E{\theta}[ \{\tr(K\jordan XX\transm)\}^2]\\
&=\E{\theta}\{(X\transm K X)^2\},
\end{align*}
which is the fourth moment of a normal distribution and hence finite.  Thus all the conditions for consistency and asymptotic normality are satisfied.  We refrain from calculating a specific expression for the Godambe information. 

\subsection{Jordan linear concentration models}

Consider the special case where $L$ is closed under the Jordan product, i.e.\ it forms a Jordan subalgebra of $\Sym{p}$, or equivalently $\Theta = L\cap \Sym{p}_+$ is closed under inversion. Such hypotheses are exactly those which are linear in both the covariance and inverse covariance \citep{jensen:88}.  In particular, $L$ contains all models which are determined by invariance under a subgroup of the general linear group \citep{andersson:75}. We shall show that the MLE and the SME coincide for these models. First we need a lemma.

\begin{lemma}\label{lem:jordanproj}
If $L$ is a Jordan subalgebra of $\Sym{p}$ then for any $A\in L$ and $B\in \Sym{p}$ we have $\projL(A\jordan B)= A\jordan \projL(B)$.
\end{lemma}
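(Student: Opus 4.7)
The plan is to prove equality by showing both sides lie in $L$ and agree as linear functionals on $L$ via the trace inner product. First I would record the key algebraic fact that multiplication by a fixed symmetric matrix under the Jordan product is self-adjoint with respect to $\ipd{\cdot}{\cdot}$, i.e.\ for any $A, B, C \in \Sym{p}$,
\[
\ipd{A \jordan B}{C} = \ipd{B}{A \jordan C}.
\]
This is an immediate consequence of the cyclic property of the trace applied to $\tr((AB+BA)C/2)$ and $\tr(B(AC+CA)/2)$.

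Next I would observe that both $\projL(A \jordan B)$ and $A \jordan \projL(B)$ belong to $L$: the first by definition of the projection, the second because $A \in L$, $\projL(B) \in L$, and $L$ is assumed closed under the Jordan product. Since $L$ is finite-dimensional with the restricted trace inner product, two elements of $L$ coincide iff they have the same inner product with every $C \in L$.

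The main computation is then the chain
\[
\ipd{\projL(A \jordan B)}{C} = \ipd{A \jordan B}{C} = \ipd{B}{A \jordan C} = \ipd{\projL(B)}{A \jordan C} = \ipd{A \jordan \projL(B)}{C},
\]
valid for every $C \in L$. The first equality uses that $C \in L$ and the defining property of orthogonal projection; the second and fourth use the self-adjointness of Jordan multiplication by $A$; the third uses that $A \jordan C \in L$ (again by the Jordan subalgebra hypothesis), so inner products with $A \jordan C$ do not see the orthogonal complement of $L$.

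There is no genuine obstacle: the only subtle point is recognising that the subalgebra hypothesis is needed \emph{twice}, once to place $A \jordan \projL(B)$ in $L$ and once to ensure $A \jordan C \in L$ so the projection on $B$ can be inserted in the middle step. I would make sure to highlight this in the write-up since it is the entire content of the lemma.
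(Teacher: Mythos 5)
Your proposal is correct and rests on exactly the same ingredients as the paper's proof: the self-adjointness of Jordan multiplication by $A$ with respect to the trace inner product (the paper writes this as $\ipd{A\jordan B - A\jordan B_0}{K} = \ipd{B-B_0}{K\jordan A}$) together with the closure of $L$ under $\jordan$ used in the two places you identify. The only cosmetic difference is that the paper verifies directly that $A\jordan B - A\jordan\projL(B)$ is orthogonal to $L$, whereas you test both sides against an arbitrary $C\in L$; these are the same argument.
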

\begin{proof}
Let $B_0 = \projL(B)$. Clearly we have $A\jordan B_0\in L$ since $L$ is closed under the Jordan product. Further, for any $K\in L$ we have
\begin{align*}
\ipd{A\jordan B-A\jordan B_0}{K}&=\tr(ABK)-\tr(AB_0K)
\\&=
\ipd{B-B_0}{K\jordan A}=0
\end{align*}
where the last equality follows because $K\jordan A \in L$. Thus $A\jordan B-A\jordan B_0$ is orthogonal to $L$ and the lemma follows.
\end{proof}
\noindent We now obtain the desired result.
\begin{theorem}If the subspace $L$ is a Jordan subalgebra then the SME is equal to the MLE. Furthermore, if $\projL(W)$ is invertible we have
\[\hat K = \check K = \{\projL(W)\}^{-1}.\]
\end{theorem}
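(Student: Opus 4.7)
The plan is to substitute $K \in L$ into both the SME estimating equation and the likelihood score equation, and use the Jordan-algebra hypothesis (via Lemma \ref{lem:jordanproj}) to collapse the projection $\projL$ against a Jordan product, so that both estimating equations reduce to the same explicit condition on $K$.

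For the SME side, I would begin with \eqref{eq:smeconc}, namely $\projL(K \jordan W) = I_p$, and apply Lemma \ref{lem:jordanproj} with $A = K \in L$ and $B = W$ to rewrite the equation as $K \jordan \projL(W) = I_p$. Taking $K_0 = \{\projL(W)\}^{-1}$ as a candidate, the symmetry of $\projL(W)$ gives $K_0 \projL(W) = \projL(W) K_0 = I_p$, so $K_0 \jordan \projL(W) = I_p$ by direct substitution. The Jordan closure of $L \cap \Sym{p}_+$ under matrix inversion — which is precisely the characterization of Jordan subalgebras used in the preceding paragraph — places $K_0$ back inside $L$, so it is a legitimate parameter. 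Uniqueness of the SME follows from \eqref{eq:D2}: using Lemma \ref{lem:jordanproj} once more, one can show $D_2(\eta)/n = \tr(\eta^2 \projL(W))$, which is positive definite in $\eta \in L$ whenever $\projL(W)$ is positive definite, forcing $\check K = K_0$.

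For the MLE side, I would differentiate the Gaussian log-likelihood $\ell(K) = \frac{n}{2}\{\log\det K - \tr(KW)\}$ in directions $H \in L$ to obtain the familiar score equation $\projL(K^{-1}) = \projL(W)$. Jordan closure again gives $K^{-1} \in L$ for any $K \in L \cap \Sym{p}_+$, so $\projL(K^{-1}) = K^{-1}$, and the equation collapses to $K^{-1} = \projL(W)$, i.e.\ $\hat K = \{\projL(W)\}^{-1} = K_0$. Combined with the previous paragraph this yields $\hat K = \check K$, as claimed.

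The main subtlety I anticipate is the bridge between the stated invertibility of $\projL(W)$ and the positive definiteness used both for $K_0 \in \Theta$ and for the SME uniqueness argument above. In the Gaussian-sample setting $W$ is almost surely in $\Sym{p}_+$ for $n \geq p$, and $\projL(W)$ then inherits positivity on $L$, but this deserves an explicit sentence in the write-up. The conceptually central step — exchanging $\projL$ and the Jordan product, so that $\projL(K \jordan W) = K \jordan \projL(W)$ — is short once Lemma \ref{lem:jordanproj} is in hand, but it is the pivot on which the entire theorem rests, and the alignment between the two estimators is essentially the statement that Jordan closure makes the projection in the SME equation and the projection in the MLE equation act on elements already lying in $L$.
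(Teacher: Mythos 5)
Your proposal is correct and follows essentially the same route as the paper: both hinge on Lemma~\ref{lem:jordanproj} to rewrite $\projL(K\jordan W)$ as $K\jordan\projL(W)$ and on Jordan closure under inversion to collapse $\projL(K^{-1})$ to $K^{-1}$ in the likelihood equation, yielding $\{\projL(W)\}^{-1}$ for both estimators. The only cosmetic difference is that the paper obtains the likelihood equation from the exponential-family mean-value identity $\projL(W)=\E{K}\{\projL(W)\}$ rather than by direct differentiation, and your extra remarks on uniqueness and on positivity of $\projL(W)$ are sensible elaborations of steps the paper leaves implicit.
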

\begin{proof} 
The family is a full and canonical exponential family with $\projL(W)$ as the canonical sufficient statistic, and hence the likelihood equation becomes
\[\projL(W) = \E{K}\{\projL(W)\} = \projL(K^{-1})= K^{-1}.\]
This implies $\hat K = \{\projL(W)\}^{-1}$.

As $L$ is a Jordan subalgebra we have $I_p\in L$. Using Lemma~\ref{lem:jordanproj}, the score matching equation \eqref{eq:smeconc} reduces to
\[\projL(K\jordan W)=K\jordan \projL(W)=I_p,\]
whence we get $\check K = \{\projL(W)\}^{-1}$. This completes the proof.
\end{proof}

\subsection{Existence and uniqueness}

Having observed a sample $x=(x^1,\ldots,x^n)$, the score matching equation \eqref{eq:smeconc} has a unique solution if and only if any of the following equivalent conditions hold:
\begin{enumerate}
\item The quadratic form $D_2(K)=\sum_{i=1}^n \|Kx^i\|^2$ is positive definite on $L$
\item $K=0$ is the only element of $L$ which maps all $x^i$ to zero
\item The kernel of the linear map $K\to \projL(K\jordan W)$ is trivial.
\end{enumerate}
We say \emph{the SME exists} if \eqref{eq:smeconc} has a unique solution, ignoring the fact that $\check K$ may not be positive definite. We have the following relation between existence of the SME and the MLE. 

\begin{proposition}\label{prop:sme2mle} Consider a Gaussian linear concentration model and let $W$ be an empirical covariance matrix as above. If the SME for $W$ exists, then the MLE for $W$ also exists.
\end{proposition}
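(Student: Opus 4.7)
The plan is to establish coercivity of the Gaussian log-likelihood on $L\cap \Sym{p}_+$; strict concavity then gives existence and uniqueness of the MLE by standard convex analysis. Dropping additive constants and an overall positive factor, the rescaled log-likelihood is
\[\ell(K)=\log\det K-\tr(KW),\]
and I would prove existence by verifying $\ell(K)\to -\infty$ as $K$ approaches the relative boundary of $L\cap \Sym{p}_+$, in the two distinct senses: $K$ approaches a rank-deficient PSD matrix, and $\normd{K}\to\infty$.

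The first case is automatic: $\log\det K\to -\infty$ while $\tr(KW)$ stays bounded on any bounded set. For the second, I would write $K=\lambda M$ with $\lambda=\normd{K}\to\infty$ and $\normd{M}=1$. The term $\log\det K$ grows at most logarithmically in $\lambda$ (for example via $\log\det K\le p\log(\tr(K)/p)$), whereas $\tr(KW)=\lambda\,\tr(MW)$ grows linearly in $\lambda$; so coercivity follows provided $\tr(MW)$ is bounded below by some $\epsilon>0$ uniformly over the sphere $\{M\in L\cap\overline{\Sym{p}_+}:\normd{M}=1\}$.

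Thus the real work reduces to showing that, under SME existence, $\tr(MW)>0$ for every nonzero $M\in L$ with $M\succeq 0$; compactness of the above sphere then upgrades this pointwise statement to the required uniform bound. To establish the pointwise positivity I would argue by contradiction. If some nonzero PSD $M\in L$ satisfied $\tr(MW)=0$, then $\tr(M^{1/2}WM^{1/2})=0$ together with $M^{1/2}WM^{1/2}\succeq 0$ would force $M^{1/2}WM^{1/2}=0$, whence $W^{1/2}M^{1/2}=0$ and therefore $WM=0$. Because $\ker W=(\Span\{x^1,\ldots,x^n\})^{\perp}$, the identity $WM=0$ together with the symmetry of $M$ gives $Mx^i=0$ for every $i$, and criterion (2) of the SME-existence characterization forces $M=0$, contradicting our choice.

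The main obstacle is precisely this last bridge — translating the linear-algebraic SME existence criterion ``any $K\in L$ annihilating every sample is zero'' into the positivity statement ``$\tr(MW)>0$ on the nonzero PSD part of $L$'' that powers the coercivity argument at infinity. Once that is in hand, both coercivity cases are routine and existence of the MLE follows from the strict concavity of $\ell$ on the convex set $L\cap\Sym{p}_+$.
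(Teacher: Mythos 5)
Your proof is correct, and it reaches the conclusion by a genuinely different route from the paper, although the two arguments share the same two computational kernels. The paper argues by contraposition: if the MLE fails to exist it invokes a cited result \citep[Theorem 5.5]{olesbog} to produce a recession direction $A$ of the unbounded likelihood level set, compares the logarithmic growth of $\log\det(I_p+tA)$ with the linear term $t\tr(AW)$ to force $\tr(AW)\le 0$, and then uses positive semidefiniteness of $A$ and $W$ to conclude $A\jordan W=0$, violating the kernel criterion for SME existence. You instead work forward from SME existence and prove coercivity of $\ell$ on $L\cap\Sym{p}_+$ directly, which requires the extra (but routine) bookkeeping of the two boundary regimes and a compactness argument on the unit sphere of $L\cap\overline{\Sym{p}_+}$ to make the lower bound $\tr(MW)\ge\epsilon$ uniform. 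The shared ingredients are exactly the $\log$-versus-linear growth comparison and the implication $\tr(MW)=0,\ M,W\succeq 0\Rightarrow MW=0\Rightarrow Mx^i=0$ for all $i$; your derivation of the latter via $M^{1/2}WM^{1/2}=0$ and $\ker W=(\Span\{x^1,\ldots,x^n\})^{\perp}$ is sound. What your approach buys is self-containedness --- no appeal to the external theorem on unbounded level sets of exponential-family likelihoods --- at the cost of a slightly longer argument; the paper's version is shorter but leans on that citation, and is also stated as a pure non-existence transfer rather than yielding the quantitative coercivity estimate your version provides. One small point of phrasing: coercivity (compactness of superlevel sets) plus continuity is what gives existence of the maximizer, while strict concavity gives only uniqueness; your opening sentence slightly conflates the two, but the body of your argument supplies both pieces correctly.
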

\begin{proof} We proceed by assuming that the MLE for $W$ does not exist and showing that the SME does not exist either.
If the MLE does not exist, the convex level set of the likelihood function
\[C =\{ K\in \Theta =L\cap\Sym{p}_{+} : \ell(K)\geq \ell(I_p)\}\] must be unbounded. Here $\ell(K)$ is proportional to the logarithm of the likelihood function 
\[\ell(K) = \log\det (K) -\tr(KW)\] and we have assumed without loss of generality that $I_p\in L$. This implies \citep[Theorem 5.5]{olesbog} that there is an $A\in \Theta$ such that $I_p+ t A \in C$ for all $t\geq 0$, i.e.
\[\ell(I_p+t A) \geq \ell(I_p) \mbox{ for all $t\geq 0$,}\] or equivalently that 
\[\log \det (I_p+ t A) \geq t \tr(A W) \mbox{ for all $t\geq 0$.}\] But  if  $a_i$ are the positive eigenvalues of $A$ we have
\[\log \det (I_p+ t A) = \sum_{i=1}^p \log (1+ t a_i)\]
and for large $t$ this grows slower with $t$ than any line through the origin with positive slope. Thus we must have $\tr(AW)= \tr(A\jordan W)\leq 0$. Hence, since $A, W,$ and thus $A\jordan W$ are positive semidefinite, we have $A\jordan W=0$ and thus $\projL(A\jordan W)=0$. Thus the third condition for the existence of the SME does not hold.
\end{proof}

By choosing an orthogonal basis $e^1, \ldots, e^d$ for $L$, we can write the the matrix for the quadratic form $D_2$ as $M(x)= \{m_{uv}(x)\}$ where
\begin{equation}
m_{uv}(x) = \sum_{i=1}^n \ipp{e^ux^i}{e^vx^i} = n\tr(e^u W e^v).
\label{eq:M}
\end{equation}
Hence $D_2$ is positive definite if and only if $ \det M(x) >0$. This determinant is a polynomial in $x$ and hence it either holds that $\det M(x)=0$ for all $x$ or $\det M(x)>0$ except for a set of Lebesgue measure zero \citep{Okamoto}. In other words, either the SME exists with probability one, or else it never exists. This is in contrast to the MLE, which can exist with some probability strictly between zero and one \citep{buhl,uhler}. 

We shall say that the linear space $L$ is \emph{$n$-estimable} if the SME exists with probability one, or equivalently, if there is an $x=(x^1,\ldots,x^n) \in \mathbb R^{p\times n}$ such that $\det M(x)>0$. 

For $n\geq p$ it is well-known that $W$ is positive definite with probability one and hence $M(x)$ is positive definite and any $L$ is $n$-estimable. We may thus without loss of generality assume $n<p$ in the following.  As many high-dimensional data sets have $n$ much less than $p$, this case is highly relevant. Let $r=p-n$ and $T_k=k(k+1)/2$ denote the $k^{th}$ triangular number.

\begin{proposition}Let $L$ be a linear subspace of $\Sym{p}$. If  $\dim L>T_p-T_{r}$ then $L$ is not $n$-estimable. 
\label{prop:notEstimable}
\end{proposition}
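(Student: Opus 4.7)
My plan is to reduce the proposition to a pure dimension count inside $\Sym{p}$, exploiting the equivalent condition (2) for $n$-estimability stated just before the proposition. Given a data matrix $x=(x^1,\ldots,x^n)\in\mathbb{R}^{p\times n}$, let $V=\Span(x^1,\ldots,x^n)$ and $d=\dim V\le n$. A matrix $K\in\Sym{p}$ sends every $x^i$ to zero iff $V\subseteq\ker K$, so the set of such $K$ is exactly the linear subspace
\[K_V:=\{K\in\Sym{p}:KV=0\}.\]
Thus $L$ is $n$-estimable iff there exists a choice of $x$ for which $L\cap K_V=\{0\}$, and we must rule this out for \emph{every} $x$.

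The key calculation is $\dim K_V$. Choosing an orthonormal basis of $\mathbb{R}^p$ adapted to the decomposition $V\oplus V^{\perp}$, a symmetric $K$ with $KV=0$ is forced into block-diagonal form with $V$-block equal to zero and $V^\perp$-block an arbitrary element of $\Sym{p-d}$. Hence $\dim K_V=T_{p-d}$, and since $d\le n$ and $T_k$ is increasing in $k$ we obtain the uniform lower bound $\dim K_V\ge T_{p-n}=T_r$, valid for any choice of $x$.

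Applying the standard identity
\[\dim(L+K_V)=\dim L+\dim K_V-\dim(L\cap K_V)\]
inside the ambient space $\Sym{p}$ of dimension $T_p$ yields $\dim(L\cap K_V)\ge \dim L+\dim K_V-T_p\ge \dim L+T_r-T_p>0$, where the strict inequality is precisely the hypothesis $\dim L>T_p-T_r$. Hence for every data matrix $x$ there is a nonzero $K\in L$ annihilating all the $x^i$, so the quadratic form $D_2$ is never positive definite on $L$ and $L$ fails to be $n$-estimable. The argument is wholly elementary and has no real obstacle; the only subtlety worth flagging is that $d$ may drop below $n$ when the samples are linearly dependent, but this only enlarges $K_V$ and strengthens the conclusion.
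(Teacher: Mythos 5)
Your proof is correct and follows essentially the same route as the paper: you identify the annihilator subspace $\{K\in\Sym{p}: K\mathbb{X}=0\}$ (the paper's $\Sym{p}_0(\mathbb{X})$), compute its dimension as a triangular number, and apply the dimension formula for the intersection with $L$ to exhibit a nonzero kernel element for every data matrix. The only (harmless) difference is that you explicitly treat the case where the samples are linearly dependent, whereas the paper disposes of it with a ``with probability one'' remark; also note you reuse the symbol $d$ for $\dim\mathbb{X}$ while the surrounding text reserves it for $\dim L$.
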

\begin{proof} Let $\mathbb{X}=\Span\{x^1,\ldots,x^n\}$ and let 
$\Sym{p}_0(\mathbb{X})=\{K\in\Sym{p} : \mathbb{X}\subseteq\ker(K)\}$
be the  space of symmetric matrices that send all vectors in $\mathbb{X}$ to zero.  Since $\dim(\mathbb{X})= n$ with probability one, we have  $\dim\{\Sym{p}_0(\mathbb{X})\}= T_r$. Noticing that the quadratic form $D_2$ is positive definite over $L$ if and only if $L\cap \Sym{p}_0(\mathbb{X})= \{0\}$, and that
\begin{equation*}
\dim\{L\cap \Sym{p}_0(\mathbb{X})\} =  \dim L + \dim\{\Sym{p}_0(\mathbb{X})\} - \dim\{\Sym{p}_0(\mathbb{X})+ L\}\geq d+T_r-T_p,
\end{equation*}
we see that $D_2$ is not positive definite if $T_r+d>T_p$.
\end{proof}

Unfortunately the converse is not always true and it may happen that $L$ is not $n$-estimable even if $\dim L \leq T_p-T_r$.  In particular we note that
\begin{equation}\label{eq:counterex}
L=\left\{a e^1+be^2+ce^3+fe^4=\begin{pmatrix}
a & c & 0 & f \\
c & b & -f & 0 \\
0 & -f & a & c \\
f & 0 & c & b
\end{pmatrix} : a,b,c,f \in\mathbb R\right\}
\end{equation}
yields  a counterexample. We have $p=4$ and $\dim L=4$ so if we consider a single observation, we have $T_p-T_r= T_4-T_3 =4 =\dim L  $.   Letting $x=(x_1,x_2,x_3,x_4)$ be our single observation, the corresponding quadratic form $m_{uv}(x)$ is
\[
\begin{pmatrix}
x_{{1}}^{2}+x_{{3}}^{2}&0&x_{{1}}x_{{2}}+x_{{3}}x_{{4}}&x_{{1}}x_{{4}}-x_{{2}}x_{{3}}\\
0&x_{{2}}^{2}+x_{{4}}^{2}&x_{{1}}x_{{2}}+x_{{3}}x_{{4}}&x_{{1}}x_{{4}}-x_{{2}}x_{{3}}\\
x_{{1}}x_{{2}}+x_{{3}}x_{{4}}&x_{{1}}x_{{2}}+x_{{3}}x_{{4}}&x_{{1}}^{2}+x_{{2}}^{2}+x_{{3}}^{2}+x_{{4}}^{2}&0\\
x_{{1}}x_{{4}}-x_{{2}}x_{{3}}&x_{{1}}x_{{4}}-x_{{2}}x_{{3}}&0&x_{{1}}^{2}+x_{{2}}^{2}+x_{{3}}^{2}+x_{{4}}^{2}
\end{pmatrix}.
\]
Direct computation shows that $m_{uv}(x)$ is singular since it has the zero eigenvector
\[
\begin{pmatrix}
x_2^2+x_4^2, & x_1^2+x_3^2, & -x_1x_2-x_3x_4, & x_2x_3-x_1x_4 
\end{pmatrix}.
\]
This particular $L$, investigated by \citet{jensen:88}, is an example of a Jordan subalgebra and we  conclude  --- as \citeauthor{jensen:88} ---  that  the MLE also fails to exist.

Gaussian graphical models \citep{dempster:72} are special instances of linear concentration models. For example, in the model given by the four-cycle (Figure~\ref{fig:fourcycle}),
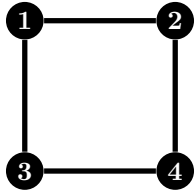
\begin{figure}[htb]
\center
\begin{tikzpicture}
[node/.style={circle,fill=black,inner sep=0pt,minimum size=5mm,text=white,font=\bfseries},
edge/.style={draw=black,line width=2pt}]
\node[node] (1)  at (-1,1) {1};
\node[node] (2) at (1,1) {2};
\node[node] (4) at (-1,-1) {3};
\node[node] (3) at (1,-1) {4};
\draw[edge] (1)--(2);
\draw[edge] (2)--(3);
\draw[edge] (3)--(4);
\draw[edge] (4)--(1);
\end{tikzpicture}
\caption{A four-cycle Gaussian graphical model.}
\label{fig:fourcycle}
\end{figure}
the MLE can only be calculated with iterative methods. If $n=2$, the MLE may or may not exist, whereas for $n=3$ the MLE always exists \citep{buhl}.  For $n=2$ we have $T_p-T_r=10-3 =7$, and since $\dim L = 8$, Proposition~\ref{prop:notEstimable} yields that the SME does not exist. For $n=3$ and above both the SME and MLE exist with probability one, the SME being a solution to a system of eight linear equations.

In  the following we list a number of facts about $n$-estimability which may assist in determining whether a given subspace $L$ is $n$-estimable. In particular,  we show that a subspace of an $n$-estimable space is $n$-estimable, and that a change of coordinate system does not affect $n$-estimability.
 
\begin{lemma}\label{lem:subspace} If $L$ is $n$-estimable and $L_0\subseteq L$, then $L_0$ is $n$-estimable. If $L$ is $n$-estimable with $n'>n$, then $L$ is $n'$-estimable.
\end{lemma}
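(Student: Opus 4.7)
The plan is to use the characterization of $n$-estimability given in condition (2) just before Proposition~\ref{prop:notEstimable}: $L$ is $n$-estimable if and only if there exists a sample $x = (x^1, \ldots, x^n) \in \mathbb{R}^{p\times n}$ such that the only $K \in L$ with $Kx^i = 0$ for all $i = 1,\ldots,n$ is $K = 0$. Both parts follow by simple monotonicity arguments using this characterization, so there is no real obstacle; the proof is essentially a bookkeeping exercise.

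For the first claim, I would fix a sample $x^1, \ldots, x^n$ witnessing the $n$-estimability of $L$. For any $K \in L_0 \subseteq L$ satisfying $Kx^i = 0$ for all $i$, we have in particular $K \in L$, so by assumption $K = 0$. Thus the same sample witnesses $n$-estimability of $L_0$.

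For the second claim, I would start from a sample $x^1, \ldots, x^n$ witnessing $n$-estimability of $L$ and extend it to a sample of size $n'$ by choosing any vectors $x^{n+1}, \ldots, x^{n'} \in \mathbb{R}^p$ (for instance, $x^j = 0$ for $j > n$). If $K \in L$ satisfies $Kx^i = 0$ for all $i = 1, \ldots, n'$, then in particular $Kx^i = 0$ for $i = 1, \ldots, n$, whence $K = 0$ by the assumed $n$-estimability of $L$. Hence $L$ is $n'$-estimable.

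Equivalently, one can phrase both arguments via the quadratic form $D_2$ in \eqref{eq:D2}: restricting a positive definite quadratic form on $L$ to $L_0 \subseteq L$ preserves positive definiteness, and adding extra nonnegative terms (from extra observations) to a positive definite form keeps it positive definite. This makes clear that the result is really just a monotonicity statement for the quadratic form $D_2$ both in $L$ and in the sample size.
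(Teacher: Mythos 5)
Your proof is correct and matches the paper's argument: the paper phrases the same monotonicity reasoning via the subspace $\Sym{p}_0(\mathbb{X})$ of symmetric matrices annihilating the sample span, showing $L_0\cap \Sym{p}_0(\mathbb{X})\subseteq L\cap \Sym{p}_0(\mathbb{X})=\{0\}$ and $\Sym{p}_0(\mathbb{Y})\subseteq\Sym{p}_0(\mathbb{X})$ for the enlarged sample, which is exactly your kernel-condition argument in different notation.
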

\begin{proof}
The first statement follows since  $L_0\cap \Sym{p}_0(\mathbb{X})\subseteq L\cap \Sym{p}_0(\mathbb{X})= \{0\}$. If we let $\mathbb{Y}=\mbox{span}\{x^1,\dots,x^{n'}\}$ we have  $\Sym{p}_0(\mathbb{Y})\subseteq \Sym{p}_0(\mathbb{X})$ and the second statement follows since $L\cap \Sym{p}_0(\mathbb{Y})\subseteq L\cap \Sym{p}_0(\mathbb{X})= \{0\}$.
\end{proof}

\begin{lemma}\label{lem:gl}
If  $A\in \gl{p}$ is invertible, then $L$ is $n$-estimable if and only if  $L_1=AL A\transm $ is $n$-estimable.
\end{lemma}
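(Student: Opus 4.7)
The plan is to exploit the linear isomorphism $\phi\colon\Sym{p}\to\Sym{p}$ defined by $\phi(K)=AKA\transm$, which is bijective because $A\in\gl{p}$. By construction $\phi$ restricts to a linear bijection $L\to L_1$, so membership of $K_1$ in $L_1$ is equivalent, via $\phi^{-1}$, to membership of a corresponding $K$ in $L$. The task then reduces to understanding how $\phi$ interacts with the kernel condition characterizing $n$-estimability from Proposition~\ref{prop:notEstimable}, namely $L\cap\Sym{p}_0(\mathbb{X})=\{0\}$ for some $\mathbb{X}=\Span\{x^1,\ldots,x^n\}$.

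The key identity is that for any $K\in\Sym{p}$ and $y\in\mathbb R^p$,
\[
\phi(K)\,y \,=\, AKA\transm y \,=\, 0 \;\Longleftrightarrow\; K(A\transm y)=0,
\]
since $A$ is invertible. Equivalently, if one sets $\mathbb{X}=A\transm\mathbb{Y}$, then $\phi$ maps $\Sym{p}_0(\mathbb{X})$ bijectively onto $\Sym{p}_0(\mathbb{Y})$: congruence by $A$ on symmetric matrices dualizes, at the level of null spaces, to the action of $A\transm$ on vectors.

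Now assume $L$ is $n$-estimable and pick a witness $x=(x^1,\ldots,x^n)$ with $L\cap\Sym{p}_0(\mathbb{X})=\{0\}$, where $\mathbb{X}=\Span\{x^1,\ldots,x^n\}$. Define $y^i=(A\transm)^{-1}x^i$, so $A\transm\mathbb{Y}=\mathbb{X}$ with $\mathbb{Y}=\Span\{y^1,\ldots,y^n\}$. If $K_1\in L_1\cap\Sym{p}_0(\mathbb{Y})$, write $K_1=\phi(K)$ for the unique $K\in L$; the displayed identity forces $Kx^i=0$ for all $i$, hence $K=0$ and so $K_1=0$. Thus $L_1$ is $n$-estimable, and the reverse implication follows by replaying the argument with $A^{-1}\in\gl{p}$ since $L=A^{-1}L_1(A^{-1})\transm$. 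No real obstacle arises here: the entire proof is essentially bookkeeping, driven by the elementary observation that an invertible congruence on $\Sym{p}$ amounts to a coordinate change by $A\transm$ in $\mathbb R^p$.
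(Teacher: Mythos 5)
Your proof is correct and follows essentially the same route as the paper's one-line argument, which rests on the identity $L_1\cap \Sym{p}_0(\mathbb{Y})=\phi\bigl(L\cap \Sym{p}_0(A\transm \mathbb{Y})\bigr)$ for the congruence $\phi(K)=AKA\transm$. You have simply written out the kernel correspondence $AKA\transm y=0\Leftrightarrow K(A\transm y)=0$ and the symmetry under $A^{-1}$ explicitly, which the paper leaves implicit.
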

\begin{proof}
This follows as $L_1\cap \Sym{p}_0(\mathbb{X})= L\cap \Sym{p}_0(A\transm \mathbb{X})$.
\end{proof} 

We next identify $n$-estimability with the ability to transform $L$ into what we call \emph{standard form}.  This condition may be easier to check in some situations.
We first identify $K\in\Sym{p}$ with $A\in\Sym{n},B\in\mathbb R^{r\times n}$ and $C\in\Sym{r}$ via
\[
K = \begin{pmatrix}
A & B\transm \\ B & C
\end{pmatrix}.
\]
Denote by $\LowerCorner$ the subspace of $\Sym{p}$ with $C=0$. We then have $\dim\Sym{p}=T_p$, $\dim\Sym{r}=T_r$, and $\dim{\LowerCorner}=T_p-T_r$.
We say that $L$ has \emph{$n$-standard form} if 
\begin{equation}
\label{Lcondition}
L = \left\{\begin{pmatrix}
A & B\transm \\ B & F(A,B)
\end{pmatrix}: A\in\Sym{n}, B\in\mathbb R^{r\times n}, \right\}
\end{equation}
for some linear function $F: \Sym{n}\times \mathbb R^{r\times n}\to \Sym{r}$.
Note that if $L$ has $n$-standard form then we have $d=\dim L=\dim\LowerCorner= T_p-T_r.$
\begin{lemma}\label{lem:standard}
If $L$ has $n$-standard form then $L$ is $n$-estimable.
\end{lemma}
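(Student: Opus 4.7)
The plan is to exhibit an explicit sample $x=(x^1,\ldots,x^n)$ for which the condition $L\cap \Sym{p}_0(\mathbb{X})=\{0\}$ (equivalently, condition 2 in the list before Proposition~\ref{prop:sme2mle}) holds; by the earlier polynomial/zero-measure argument following \eqref{eq:M}, the existence of any such sample is enough to conclude $n$-estimability.

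The natural choice is to take $x^i=e_i$, the $i^{th}$ standard basis vector of $\mathbb R^p$, for $i=1,\ldots,n$. Then $\mathbb{X}=\Span\{e_1,\ldots,e_n\}$ is the subspace of $\mathbb R^p$ corresponding to the first $n$ coordinates. A symmetric matrix $K\in\Sym{p}$ satisfies $Ke_i=0$ for $i=1,\ldots,n$ if and only if its first $n$ columns are zero, and by symmetry this forces the first $n$ rows to be zero as well. In block form with $K=\begin{pmatrix}A & B\transm \\ B & C\end{pmatrix}$ (as in the definition of $n$-standard form), this is precisely the condition $A=0$ and $B=0$, so
\[
\Sym{p}_0(\mathbb{X}) = \left\{\begin{pmatrix}0 & 0 \\ 0 & C\end{pmatrix}: C\in\Sym{r}\right\}.
\]

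Now let $K\in L\cap \Sym{p}_0(\mathbb{X})$. By the assumption that $L$ has $n$-standard form, $K=\begin{pmatrix}A & B\transm \\ B & F(A,B)\end{pmatrix}$ for some $A\in\Sym{n}$ and $B\in\mathbb R^{r\times n}$. Membership in $\Sym{p}_0(\mathbb{X})$ forces $A=0$ and $B=0$, and then linearity of $F$ gives $F(A,B)=F(0,0)=0$. Hence $K=0$, showing $L\cap \Sym{p}_0(\mathbb{X})=\{0\}$, so $L$ is $n$-estimable.

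There is no real obstacle: the definition of $n$-standard form is tailor-made so that the lower-right block of any $K\in L$ is determined linearly by the other two blocks, while the choice $x^i=e_i$ is exactly what makes those other two blocks vanish on $\Sym{p}_0(\mathbb{X})$. The only point worth flagging is that $F$ must be used as a \emph{linear} map so that $F(0,0)=0$; this is guaranteed by the definition.
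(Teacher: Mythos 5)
Your proof is correct and follows essentially the same route as the paper: choose $x^i=e^i$ for $i=1,\ldots,n$, observe that $Ke^i=0$ for all $i$ forces $A=0$ and $B=0$, and conclude $K=0$ from the standard form. The only difference is that you make explicit the step $F(0,0)=0$ via linearity of $F$, which the paper leaves implicit.
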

\begin{proof} If $L$ has $n$-standard form we can choose $x=(x^1,\ldots,x^n)$ as the first $n$ standard basis vectors $e^1,\ldots,e^n$ of $\mathbb R^p$. Then for any $K$ of the form \eqref{Lcondition} we have $Ke^i=0$ for all $i=1,\ldots, n$ if and only if $A=0$ and $B=0$.  Hence we must have $K=0$, so the quadratic form $D_2(K)$ is positive definite.
\end{proof}

\begin{corollary}\label{cor:estimability}If $L\subseteq L_0$ and for some $A\in \gl{p}$, $L_1=A L_0 A\transm$ has $n$-standard form, then $L$ is $n$-estimable.
\end{corollary}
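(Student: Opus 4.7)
The statement follows by chaining together the three preceding lemmas, so my plan is essentially to identify which lemma handles which implication and to apply them in the correct order.

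First I would start from the hypothesis that $L_1 = A L_0 A\transm$ has $n$-standard form. Applying Lemma~\ref{lem:standard} directly gives that $L_1$ is $n$-estimable. This is the ``base case'' that gets the chain started.

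Next I would transfer $n$-estimability from $L_1$ back to $L_0$. Since $A \in \gl{p}$ is invertible, Lemma~\ref{lem:gl} says $n$-estimability is preserved in both directions under the congruence $L_0 \mapsto A L_0 A\transm$, so from $L_1$ being $n$-estimable we conclude $L_0$ is $n$-estimable.

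Finally, since $L \subseteq L_0$, the subspace part of Lemma~\ref{lem:subspace} hands us the conclusion that $L$ is $n$-estimable. There is no real obstacle here: the content is entirely in the three lemmas, and the corollary just packages them as a practical sufficient condition. The only thing to be mindful of in the write-up is to cite the lemmas in the right order and in the right direction (in particular, using Lemma~\ref{lem:gl} as an ``if and only if'' so that $n$-estimability is pulled back from $L_1$ to $L_0$, not pushed forward).
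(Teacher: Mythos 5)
Your proposal is correct and matches the paper's proof, which likewise combines Lemma~\ref{lem:standard}, Lemma~\ref{lem:gl}, and Lemma~\ref{lem:subspace} in exactly this order; you have simply spelled out the chain more explicitly than the one-line proof in the paper.
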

\begin{proof}
This follows by combining Lemma~\ref{lem:subspace}, Lemma~\ref{lem:gl}, and Lemma~\ref{lem:standard}.
\end{proof}

We also note that the converse to Corollary~\ref{cor:estimability} holds, as shown in the following lemma.
\begin{lemma}\label{lem:estimability}If $L$ is $n$-estimable then there exists $A\in\gl{p}$ and $L_0 \supseteq L$ such that $L_1=A L_0 A\transm$ has $n$-standard form.
\end{lemma}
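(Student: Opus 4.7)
The plan is to unwind $n$-estimability into a direct linear-algebraic construction. Since $L$ is $n$-estimable, there exists $x = (x^1,\ldots,x^n)$ with $\det M(x) > 0$. Both $\{x : \det M(x) \neq 0\}$ and $\{x : x^1,\ldots,x^n \text{ linearly independent}\}$ are complements of proper algebraic subvarieties of $\mathbb{R}^{p\times n}$, hence both dense open in the usual topology; since the first is nonempty by assumption and $n \leq p$, I can choose $x$ satisfying both. This genericity argument is the only real delicate step.

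Next, I would construct $T \in \gl{p}$ with $T\transm e^i = x^i$ for $i = 1,\ldots,n$, by taking the first $n$ columns of $T\transm$ to be $x^1,\ldots,x^n$ and completing to an invertible matrix. The computation in the proof of Lemma~\ref{lem:gl} then yields $L_1 \cap \Sym{p}_0(\Span\{e^1,\ldots,e^n\}) = \{0\}$, where $L_1 = TLT\transm$.

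The key structural observation is that $\Sym{p}_0(\Span\{e^1,\ldots,e^n\})$ consists precisely of symmetric matrices whose top-left $n \times n$ block and the two off-diagonal blocks all vanish; this is a linear complement of $\LowerCorner$ in $\Sym{p}$. Let $\pi : \Sym{p} \to \Sym{n} \times \mathbb{R}^{r\times n}$ be the linear map sending a block-decomposed matrix to its $(A,B)$-part in the notation of the excerpt. Then $\ker \pi = \Sym{p}_0(\Span\{e^1,\ldots,e^n\})$, so $\pi|_{L_1}$ is injective.

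Finally, I would extend $L_1$ to a subspace $L_1' \subseteq \Sym{p}$ of dimension $T_p - T_r$ on which $\pi$ is bijective: pick any complement of $\pi(L_1)$ inside $\Sym{n} \times \mathbb{R}^{r\times n}$, lift a basis of that complement arbitrarily to $\Sym{p}$, and adjoin these lifts to $L_1$. A short check confirms $L_1' \cap \ker \pi = \{0\}$, whence $\pi|_{L_1'}$ is a linear bijection onto $\Sym{n} \times \mathbb{R}^{r\times n}$; the $\Sym{r}$-component of $\pi^{-1}$ is the linear function $F$ witnessing that $L_1'$ is in $n$-standard form. Setting $L_0 = T^{-1} L_1' T^{-\transm}$ gives $L_0 \supseteq L$ with $TL_0T\transm = L_1'$ in $n$-standard form, completing the proof.
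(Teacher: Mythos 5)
Your proof is correct and follows essentially the same route as the paper's: change coordinates so that a witnessing sample becomes the first $n$ standard basis vectors, observe that $n$-estimability makes the projection onto the $(A,B)$-blocks injective on the transformed $L$, and extend to a superspace in standard form. You are in fact more explicit than the paper on two points it glosses over, namely securing linear independence of the $x^i$ (via genericity rather than the paper's minimality-of-$n$ device) and the linear extension defining $F$ on all of $\Sym{n}\times\mathbb{R}^{r\times n}$ rather than just on the image of $L_1$.
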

\begin{proof} Let $n$ be the smallest integer $m$ such that $L$ is $m$-estimable. 
Then there exist orthogonal vectors $x^1\ldots,x^n$ such that $W: K\to K\jordan \sum_{1}^nx^i{x^i}\transm/n$ has full rank. Let $A$ denote the transformation to a coordinate system where $x^1,\ldots,x^n$ are the first $n$ basis vectors. In this coordinate system we have \[W(K)= K\jordan \sum_{1}^nx^i{x^i}\transm/n =\begin{pmatrix}K_{11}&K_{12}/2\\K_{21}/2&0\end{pmatrix}/n\]
where 
\[ K =\begin{pmatrix}K_{11}&K_{12}\\K_{21}&K_{22}\end{pmatrix},\] with $K_{11}\in \Sym{n}, K_{12}\in \mathbb R^{n\times r}$ and $K_{22}\in \Sym{r}$. Since $L$ is $n$-estimable, this map is injective and thus we must have $K_{22}=F(K_{11}, K_{12})$ for some linear function $F$.  Thus in this basis $L\subseteq L_0$ where $L_0$ has standard form.
\end{proof}

Finally we show that all non-trivial $L$ with $d\leq T_p-T_r$ and $r=p-n=1$ are $n$-estimable.
\begin{proposition}
\label{prop:np1}
Suppose that $L$ is a linear subspace of $\Sym{p}$ with $L\cap \Sym{p}_+\neq \emptyset$ and that $\dim L \leq T_p-T_r$ with $r=1$.  Then $L$ is $n$-estimable.
\end{proposition}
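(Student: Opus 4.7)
The plan is to exploit the fact that when $r = 1$, the space $\Sym{p}_0(\mathbb{X})$ of symmetric matrices annihilating an $n$-dimensional subspace $\mathbb{X}$ collapses to one dimension, reducing $n$-estimability to finding a single rank-one matrix outside $L$.

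First, I would take $x^1,\ldots,x^n$ to be linearly independent so that $\dim \mathbb{X} = n = p-1$, and let $v$ be a unit vector spanning the one-dimensional $\mathbb{X}^\perp$. Any symmetric $K$ vanishing on $\mathbb{X}$ satisfies $\ipp{Ky}{x} = \ipp{y}{Kx} = 0$ for every $y\in\mathbb R^p$ and $x\in\mathbb{X}$, so the range of $K$ is contained in $\Span\{v\}$; combined with symmetry this forces $K = \alpha vv\transm$ for some $\alpha \in \mathbb R$. Hence $\Sym{p}_0(\mathbb{X})=\Span\{vv\transm\}$, and by the characterization of $n$-estimability used in the proof of Proposition~\ref{prop:notEstimable} it suffices to exhibit some $v \neq 0$ with $vv\transm \notin L$.

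Next, I would argue that $\{vv\transm : v \in \mathbb R^p\}$ spans all of $\Sym{p}$: the diagonal matrix units are $e^i(e^i)\transm$ and the off-diagonal symmetric units arise via polarization as $(e^i+e^j)(e^i+e^j)\transm - e^i(e^i)\transm - e^j(e^j)\transm$. Since by hypothesis $\dim L \leq T_p - T_r = T_p - 1 < T_p$, the subspace $L$ is proper in $\Sym{p}$ and hence cannot contain every rank-one symmetric matrix. Picking any $v$ with $vv\transm \notin L$ and letting $x^1,\ldots,x^{p-1}$ be any basis of $v^\perp$ gives the desired configuration.

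I expect no real obstacle here: once the one-dimensionality of $\Sym{p}_0(\mathbb{X})$ is observed, the remainder is a short dimension count. The hypothesis $L\cap\Sym{p}_+\neq\emptyset$ does not appear to enter this particular implication (the bound $\dim L \leq T_p-1$ already forces $L \subsetneq \Sym{p}$), though it is a natural standing assumption ensuring that the concentration model is well-defined.
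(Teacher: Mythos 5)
Your proof is correct and is essentially the paper's argument run in the contrapositive direction: both rest on the observations that for $r=1$ the annihilator space $\Sym{p}_0(\mathbb{X})$ is the line spanned by $vv\transm$ with $v\perp\mathbb{X}$, and that rank-one symmetric matrices span $\Sym{p}$, so a proper subspace $L$ must omit some $vv\transm$. Your remark that $L\cap\Sym{p}_+\neq\emptyset$ is not needed here also matches the paper, whose proof likewise uses only $\dim L<T_p$.
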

\begin{proof}
For contradiction, assume $d=\dim L=T_{p}-1$ 
 and that $L$ is not $n$-estimable for $n=p-1$. Thus for \emph{any} $x^1,\ldots,x^n$, the map \[W: K\to K\jordan \sum_{1}^nx^i{x^i}\transm/n\] has rank less than $d$. Assume now that $x^i =e^i, i=1,\ldots, n$ are orthonormal so that $e^1,\ldots,e^p$ form an orthonormal basis for $\mathbb R^p$.
In this basis we have
\[\sum_{1}^ne^i{e^i}\transm=\begin{pmatrix} I_{n}& 0\\0&0\end{pmatrix}.\]
and thus \[W(K)= K\jordan \sum_{1}^ne^i{e^i}\transm/n =\begin{pmatrix}K_{11}&k/2\\k\transm/2&0\end{pmatrix}/n\]
where 
\[ K =\begin{pmatrix}K_{11}&k\\k\transm&k_{22}\end{pmatrix},\] with $K_{11}\in \Sym{n}, k\in \mathbb R^{p-1}$ and $k_{22}\in \mathbb R$. 

Since $L$ is not $n$-estimable, there must be an $A\in L$ with $W(A)=0$, which implies that $A_{ij}=0$ unless $i=j=p$. We may thus assume that 
\[A=\begin{pmatrix} 0_n& 0\\0&1\end{pmatrix}.\]
In the original basis, we have $A=e^i{e^i}\transm$. Since $e^1,\ldots,e^p$ were arbitrary, we have shown that for any vector $u$ of length one, $uu\transm \in L$. Since $L$ is a linear subspace we conclude that any matrix $K$ of the form
\[K=\sum_{i=1}^p \lambda_ie^i{e^i}\transm\] is in $L$, and hence $\Sym{p}\subseteq L$.  This implies $d=T_p$, which is a contradiction. We conclude that $L$ is $n$-estimable.
\end{proof}

Notice that even when $L$ is $n$-estimable, the estimated concentration matrix may not be positive definite if $L$ is not a Jordan subalgebra. All we can say is that the estimate will be positive definite for sufficiently large $n$ by the consistency result in  Proposition~\ref{prop:consistency}.

If $\check K$ is not positive definite and the estimate of $K$ itself is of interest, it may be necessary to calculate the MLE $\hat K$: the latter exists and is positive definite whenever $\check K$ exists. In any case, lack of positive definiteness of $\check K$ indicates that the estimate may not be reliable and that there could be too few observations to justify the use of a model of such complexity.

\section{Gaussian graphical models with symmetries}
Gaussian graphical models with symmetries \citep{Hojsgaard} are linear concentration models generated by a coloured graph.  More precisely, we let $\mathcal{G}=(\NSS,\ESS)$ denote a \emph{coloured graph} where $\NSS$ is a partition of a finite vertex set $V$ into \emph{vertex colour classes} and $\ESS$ a partition of an edge set $E$ into \emph{edge colour classes}.  Such a graph determines a linear concentration model with $L= \mathcal{S}(\NSS,\ESS)$ being the set of symmetric $p\times p$ matrices $K$ with entries $k_{\alpha\beta}=0$ whenever $\alpha$ and $\beta$ are not neighbours in $\mathcal{G}$, any two off-diagonal elements being identical if the corresponding edges are in the same colour class, and any two diagonal elements identical if the corresponding vertices are in the same colour class.

Let $e^{u}$ for $u \in \NSS$ denote the $|V| \times |V|$ diagonal matrix with $e^{u}_{\alpha\alpha} = 1$ if $\alpha \in u$ and 0 otherwise. Similarly, for each edge colour class $u \in \mathcal{E}$ we let $e^{u}$ be the $|V| \times |V|$ symmetric matrix with $e^{u}_{\alpha\beta} = 1$ if $\{\alpha,\beta\} \in u$ and 0 otherwise. Then $\{e^u, u\in \NSS\cup \ESS\}$ form an orthogonal basis for $L$. The likelihood equations \citep{Hojsgaard} become 
\begin{equation}\label{eq:mleggm} \tr(e^u W)
=  \tr(e^u K^{-1}), \quad u\in \NSS\cup \ESS,\end{equation}
which are non-linear in $K$ and must be solved by iterative methods in most cases. 

One motivation for introducing these models was the potential reduction in the number of parameters of the corresponding uncoloured graphical model.  This increases the stability of estimates and allows estimators to exist for a smaller number of observations.
The last issue was considered in detail by \citet{uhler} for specific examples. As an aside we note  that the models determined by the coloured graphs  11, 14 and 17 in \citep[Table 2]{uhler} are supermodels of the Jordan linear concentration model \eqref{eq:counterex} and hence we can confirm \citeauthor{uhler}'s conjecture that in these cases, the MLE does not exist for $n=1$. 

We note that our Proposition~\ref{prop:notEstimable} implies that the SME does not exist if 
$|\NSS|+|\ESS| > n(2|V|-n+1)/2$ and believe that the condition $|\NSS|+|\ESS| \leq n(2|V|-n+1)/2$ is sufficient to ensure $n$-estimability for this particular class of linear concentration models, but have not been able to show this except for the case of $n=p-1$, cf.\ Proposition~\ref{prop:np1}. However, none of the examples in \citep{jensen:88} or \citep{uhler} provide counterexamples to this conjecture. Note that the MLE may well exist even if the SME does not exist; see for example the earlier discussion of the four-cycle. If our conjecture is correct, Proposition~\ref{prop:sme2mle} implies that $|\NSS|+|\ESS| \leq n(2|V|-n+1)/2$ is also sufficient for the existence of the MLE and hence provides a simple method of checking for this.

The linear score matching equations \eqref{eq:score} for graphical Gaussian models with symmetries are
\begin{equation}\label{eq:smeggm}
 \tr(e^u W K)= \tr(e^u)
, \quad u\in \NSS\cup \ESS,
\end{equation}
which should be compared to \eqref{eq:mleggm}; they have a strong similarity with the Yule--Walker equations for estimating the parameters of autoregressive processes in a time series, as also noted by \citet{gidas}.

 Indeed, a circular autoregressive process of order $q$ is an example of a coloured graphical model with symmetry determined by the cyclic permutation group, as displayed in
  Figure~\ref{fig:circularAR}.  In this case the Yule--Walker equations are exactly equivalent to the score matching equations.  
\begin{figure}[htb]
\center
\begin{tikzpicture}
[node/.style={circle,fill=s1,inner sep=0pt,minimum size=5mm},
edge1/.style={draw=s2,line width=2pt},
edge2/.style={draw=s3,line width=2pt,loosely dashed}]
\node[node] (1)  at (canvas polar cs:angle=90,radius=2cm) {};
\node[node] (2)  at (canvas polar cs:angle=141,radius=2cm) {};
\node[node] (3)  at (canvas polar cs:angle=192,radius=2cm) {};
\node[node] (4)  at (canvas polar cs:angle=243,radius=2cm) {};
\node[node] (5)  at (canvas polar cs:angle=294,radius=2cm) {};
\node[node] (6)  at (canvas polar cs:angle=345,radius=2cm) {};
\node[node] (7)  at (canvas polar cs:angle=39,radius=2cm) {};
\draw[edge1] (1)--(2);
\draw[edge1] (2)--(3);
\draw[edge1] (3)--(4);
\draw[edge1] (4)--(5);
\draw[edge1] (5)--(6);
\draw[edge1] (6)--(7);
\draw[edge1] (7)--(1);
\draw[edge2] (1)--(3);
\draw[edge2] (2)--(4);
\draw[edge2] (3)--(5);
\draw[edge2] (4)--(6);
\draw[edge2] (5)--(7);
\draw[edge2] (6)--(1);
\draw[edge2] (7)--(2);
\end{tikzpicture}
\caption{The circular autoregressive process of order 2 with $n=7$ as a coloured Gaussian graphical model.}
\label{fig:circularAR}
\end{figure}
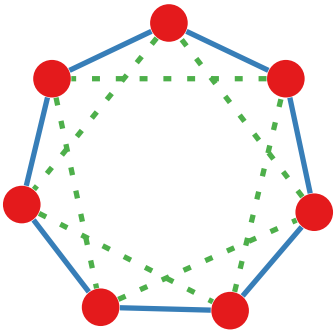

\subsection{Model selection}
\label{sec:screen}
Model selection in Gaussian graphical models with symmetry is problematic as the number of possible models is enormous. This affects both stepwise methods, as used in \citep{gRc}, and lattice based methods \citep{edwards:87}, as used in \citep{gehrmann:11}. 
The computational efficiency of the SME allows rapid screening of a large number of potential models as the minimized objective function indicating the model fit $J_2(\check\theta)=-n\tr(\check K)/2$ is particularly simple to calculate. Note that this minimum can be calculated even though $\check K$ may not be positive definite; in particular a time consuming check of positive definiteness can then be avoided. In this case the minimum may overestimate the model fit as it corresponds to the minimum of $J_2$ over the entire space $L$ rather than over $\Theta\subseteq L$.

To prevent overfitting a penalty for the number of parameters $d=\dim L$ should be added to $J_2$ to give the objective function
\begin{equation}\label{eq:penalty}J_\lambda(L)= J_2(\check K)/n+\lambda d=-\frac{1}{2}\tr(\check K) + \lambda d.\end{equation}  The scalar multiple $\lambda$ for the penalty can, for example, be determined by a method such as cross-validation. Such rapid model screening may be useful to identify a small set of plausible models to be considered by more sophisticated search procedures.

\subsection{Examples}
\label{sect:application}

We briefly describe some numerical experiments with the SME for Gaussian graphical models with and without symmetries.  These indicate that the SME provides an extremely fast estimate which is reasonably accurate for large $n$.  

\begin{figure}[htb]
\center
		\begin{tikzpicture}
		[node1/.style={circle,fill=s2,inner sep=0pt,minimum size=5mm},
		node2/.style={circle,fill=s1,inner sep=0pt,minimum size=5mm},
		node3/.style={circle,fill=s3,inner sep=0pt,minimum size=5mm},
		edge3/.style={draw=s4,line width=2pt},
		edge2/.style={draw=s5, line width=2pt,dash pattern =on 10pt off 7pt},
		edge1/.style={draw=s6,line width=2pt,dash pattern =on 3pt off 10pt}]
		\node[node1,label=below:Algebra] (Algebra) at (0,0) {}; 
		\node[node2,label=above:Vectors] (Vectors)  at (-4.5,1.5) {1};
		\node[node2,label=above:Analysis] (Analysis) at (4.5,1.5) {1};
		\node[node3,label=below:Mechanics] (Mechanics) at (-4.5,-1.5) {2};
		\node[node3,label=below:Statistics] (Statistics) at (4.5,-1.5) {2};
		\draw[edge1] (Mechanics)--(Vectors);
		\draw[edge2] (Analysis)--(Statistics);
		\draw[edge1] (Mechanics)--(Algebra);
		\draw[edge3] (Statistics)--(Algebra);
		\draw[edge3] (Vectors)--(Algebra);
		\draw[edge2] (Analysis)--(Algebra);
		\end{tikzpicture}
		\caption{A Gaussian graphical model with symmetries for the \textbf{mathmarks} dataset.}
		\label{fig:mathgraph}
		\end{figure}
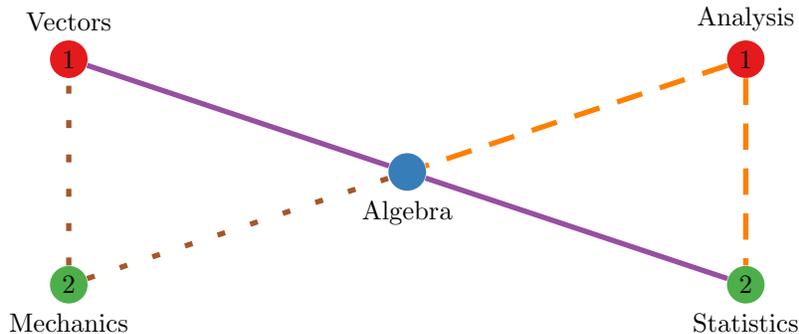
		
First we consider the \textbf{mathmarks} dataset \citep{mardia:kent:bibby:79} included in \textbf{gRc} \citep{gRc}.  Following the analysis in \citep{Hojsgaard}, we use three vertex colour classes and three edge colour classes as shown in Figure~\ref{fig:mathgraph}.  We vary the number of observations $n$ from 4  to 88  and compute both the SME and the MLE for each $n$.  Figure~\ref{fig:converge} shows how the SME approximates the MLE as $n$ grows.   The SME appears to provide a computationally efficient estimator with good accuracy for large $n$.

\begin{figure}[htb]
\center
\begin{tikzpicture}
\begin{axis}[width=11.5cm,height=6cm, xlabel=$n$, ylabel=$\sqrt{n}\sqrt{\tr\{ (\check K - \hat K)^2\}}$,yticklabel style={/pgf/number format/fixed,/pgf/number format/precision=3}, scaled ticks=false,xmin=0.9,xmax=91] 
\addplot+[thick,no markers,color=s2] table {ScaledFrobenius.txt};
\end{axis}
\end{tikzpicture}
	\caption{The scaled Frobenius distance between the SME and the MLE for the \textbf{mathmarks} database as $n$ increases.}
	\label{fig:converge}
\end{figure}
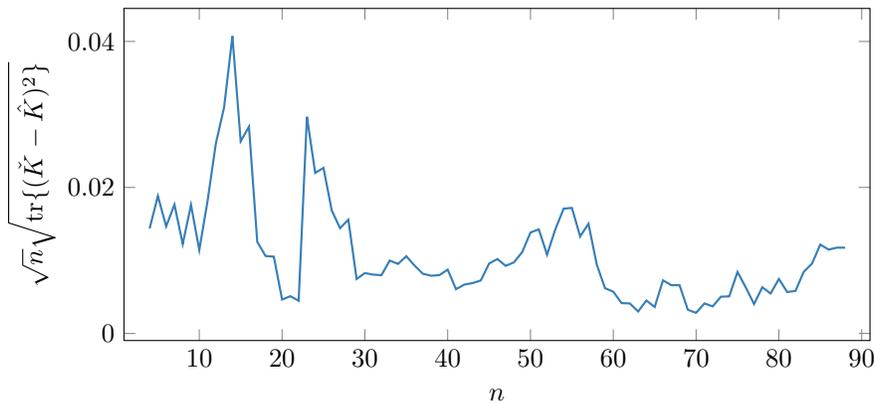

Next we shall give an example of the SME identifying a non-decomposable graphical Gaussian model.   We first simulated data from a square lattice model with $p=s^2$ vertices: the concentration matrix for our model is symmetric with upper-triangular entries
\[
K_{ij}=\begin{cases}
1\quad&\mbox{if } j=i\\
0.2\quad&\mbox{if } j=i+s\\
0.2\quad&\mbox{if } j=i+1 \mbox{ and } i\not\equiv 0 \imod{s}\\
0\quad&\mbox{otherwise}
\end{cases}
\]
for $1\leq i\leq j\leq s^2$.
The previously mentioned four-cycle (Figure~\ref{fig:fourcycle}) is such a model with $s=2$.  We then used the SME to conduct a rapid model search over uncoloured graphs $G$ on $p$ vertices.   

We now describe our model search method.  We began by initializing the graph to the best-fitting tree via Kruskal's algorithm \cite{kruskal} using squared correlations as weights.  This initialization is very fast, and if we were only searching over trees, it corresponds to the maximum likelihood estimate \citep{chow:liu:68,edwards:etal:10}.  In our case we are searching among all undirected graphs, but the tree step provides a computationally efficient starting position for the search.  Next we conducted a greedy ``forward search'' and successively add edges to $G$.  To ensure our method is scalable for large $p$, we considered adding edges in order of decreasing squared correlations and we terminated the forward search after attempting to add an edge that fails to improve the objective function.  Finally, we conducted a greedy ``backward search'' by successively removing existing edges from $G$.

Before running this algorithm we identified a suitable penalty $\lambda$.  We considered the change in the objective function $J_2$ after adding the single extra edge which most improved the objective to the true model.  By simulating several thousand samples over the grid with $s=2,\ldots,8$ and $n=s^2,2s^2,3s^2,\ldots,10s^2$, we found that the expected change in $J_2$ was approximately proportional to $\sqrt{p}\log\log(np)$.  We chose to proceed with $\lambda=\sqrt{p}\log\log(np)/(2n)$.

 We quantified the accuracy of the fitted model $\check K$ by considering the number of missing edges ($\check K_{ij}=0$ but the true $K_{ij}\neq 0$) and extra edges  ($\check K_{ij}\neq 0$ but the true $K_{ij}=0$) in Figure~\ref{fig:nMissingExtra}. 

\def\whichstep{Best}
\begin{figure}[htp]
\center
\begin{tikzpicture}
\pgfplotstableread[header=true]{NumExtraMissing.txt}\loadedtable
\begin{axis}[width=12cm,height=7cm, ymin=-10, ymax=340, xlabel=$n/ s^2$, ylabel=Number of missing and extra edges,scaled ticks=false,no markers,every axis plot/.append style={thick},xmin=0.9,xmax=10.1]
\addplot+[color=s1] table[x=nmp,y={nExtra16\whichstep}] {\loadedtable};
\addplot+[color=s2] table[x=nmp,y={nExtra64\whichstep}] {\loadedtable};
\addplot+[color=s3] table[x=nmp,y={nExtra256\whichstep}] {\loadedtable};
\legend{$s=4$,$s=8$,$s=16$}
\addplot+[dashed,color=s1] table[x=nmp,y={nMiss16\whichstep}] {\loadedtable};
\addplot+[dashed,color=s2] table[x=nmp,y={nMiss64\whichstep}] {\loadedtable} ;
\addplot+[dashed,color=s3] table[x=nmp,y={nMiss256\whichstep}] {\loadedtable};
\end{axis}
\end{tikzpicture}
\caption{The number of missing (dashed line) and extra (solid line) edges in the SME for the lattice with $s=4,8,16$ and $n$ from $n=s^2$ to $n=10s^2$.}
\label{fig:nMissingExtra}
\end{figure}
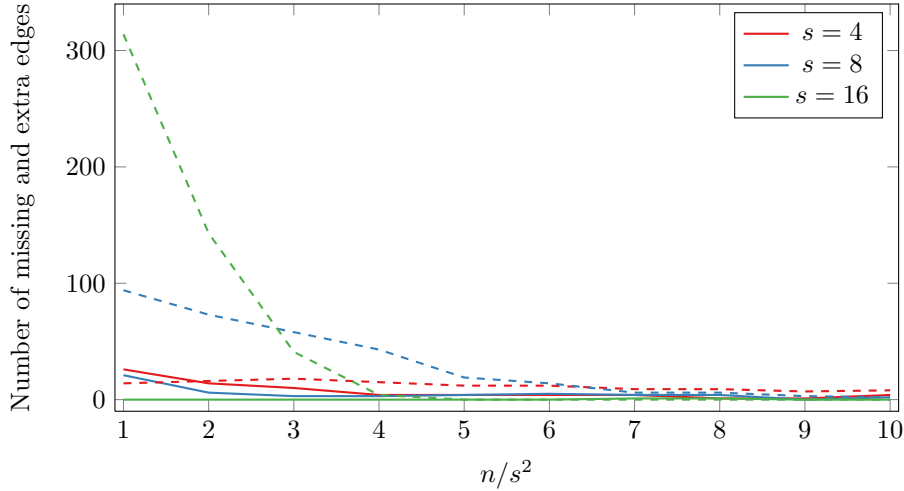

Finally, we visually inspect how the score matching estimate becomes more accurate with increasing $n$ in Figure~\ref{fig:sparsityimages}.   We see that the estimate is unstable at $n=p$, but for $n=10p$ the SME correctly identifies the majority of the true non-zero entries with few extra edges.  Note that for $p=256$ and $n=10p$ the model is correctly identified. 

\def\pletter{p}
\begin{figure}[htp]
\foreach \nS in {1,5,10}{
\foreach \p in {16,64,256}{
\pgfmathsetmacro\n{\p*\nS}
	\begin{subfigure}[b]{0.33\textwidth}
		\centering
 \frame{\includegraphics[width=\textwidth]{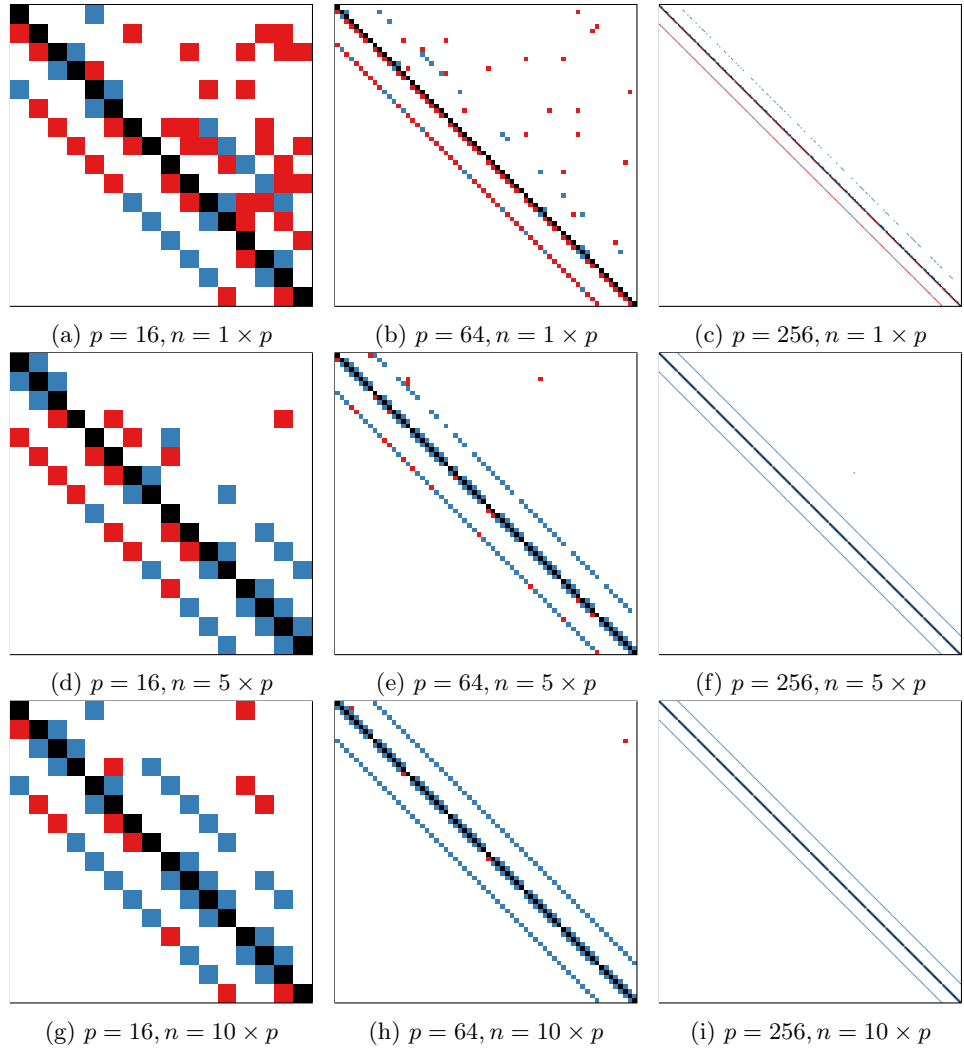}}
	\caption{$p=\p,n=\nS\times p$}
	\end{subfigure}~
}\\
}
\caption{Sparsity patterns of the estimated concentration matrices.  Above the diagonal is the SME $\check K$; below the diagonal is the true concentration matrix $K$.  Entries that are present in both $K$ and $\check K$ are blue and entries which are present in only one matrix are red.  Thus red entries below the diagonal correspond to true edges that were missed by the SME and red entries above the diagonal correspond to extra edges in the SME.}
\label{fig:sparsityimages}
\end{figure}

In the case of searching for uncoloured graphical models as above, the SME may be seen as alternative to the graphical lasso algorithm \cite{friedman:etal:08}. It is difficult to directly compare the accuracy of the SME to the graphical lasso due to the unspecified regularization parameter in the latter algorithm.  The graphical lasso estimate is extremely sensitive to the precise value of this regularization parameter: by fine-tuning the parameter for each $n$ and $p$ we were able to achieve results equal to or better than those of the SME, however the range of values which yields accurate estimates is narrow and highly dependent on $n$ and $p$.  By contrast the SME seems relatively robust against small changes in $\lambda$.

We also suspect that the SME scales better for large $p$ than the graphical lasso.  Using a implementation of the SME written in $C\#$ we were able to consider models up to $s=100$ ($p=10^4$) before running out of computer memory: even at this large $p$ each estimate of the SME could be completed within ten seconds, and with $n=10p$ our rudimentary search procedure correctly identified the model.  We attempted to test the graphical lasso at such $p$ using the R package \textbf{glasso} \citep{glasso}, however the R environment ran out of memory while attempting to load the sample covariance matrix.

Finally, we should emphasize that, in contrast to the graphical lasso, the SME can be used for graphical model with symmetries.  Thus the SME may be useful for model screening over such models, though this would require the development of computationally efficient model search strategies. It is outside the scope of the present article to study such strategies in any detail.

\section{Discussion}
Score matching is an efficient method of parameter estimation for distributions with intractable normalization constants. It is particularly suitable to parameter estimation within an exponential family, where the score estimating equations are linear and yield a consistent estimate.  The ready availability of highly optimized algorithms for linear equations means that the SME can be computed quickly and with a small memory footprint, even when the number of parameters is very large.

The method seems particularly promising for rapid model screening and it would be well worthwhile to further investigate the optimal form of the penalty for model complexity, i.e.\ the coefficient $\lambda$ in \eqref{eq:penalty}, in particular how it should depend on $n$ to ensure consistent identification of the model  along the lines in \cite{hannan:quinn:79}, see also the discussion in \cite{Erven}.
It would be an advantage to have a simple sufficient condition for the existence of the SME --- and hence also the MLE --- in Gaussian graphical models (with or without symmetries) so that a model search could be automatically restricted to models of sufficiently limited complexity. We believe the condition in Proposition~\ref{prop:notEstimable} is necessary and sufficient for the SME to exist in this case and thus sufficient for existence of the MLE. Unfortunately we only been able to show this in general for $n\geq p- 1$. 
We hope to return to these and other questions in the future.


\end{document}